\newtheorem{Thm}{Theorem}[section]%
\theoremstyle{definition} 
\newtheorem{Def}[Thm]{Definition}%
\newtheorem{Rmk}[Thm]{Remark}%
\newtheorem{Conj}[Thm]{Conjecture}%
\newtheorem{Ques}[Thm]{Question}
\newtheorem{Obs}[Thm]{Observation}
\title{\bf Entire Curves Producing
\\
Distinct  Nevanlinna Currents }
\author{Song-Yan Xie}
\begin{document}
\maketitle

\centering
{\em dedicated to Julien Duval with admiration
%\\on the occasion ($+\,\epsilon$ years) of his retirement
}

{\let\thefootnote\relax\footnote{

\noindent
{\bf Keywords:}~Entire curves;  Nevanlinna currents; Ahlfors currents; Oka theory.

\smallskip\noindent
{\bf MSC:}~30D20,  32H30, 32Q56, 32Q45, 32U40.
}}\par

\medskip
\abstract{
First, inspired by a question of Sibony, we show that in every compact complex manifold $Y$ with certain  Oka property,  there exists some entire curve $f: \mathbb{C}\rightarrow Y$ generating
all  Nevanlinna/Ahlfors currents on $Y$, by holomorphic discs  $\{f\restriction_{\mathbb{D}(c, r)}\}_{c\in \mathbb{C}, r>0}$.
Next, we answer positively a question of Yau, by constructing 
 some entire curve $g: \mathbb{C}\rightarrow X$
in the  product
$X:=E_1\times E_2$ of two elliptic curves $E_1$ and $E_2$, such that by using concentric
holomorphic discs  $\{g\restriction_{\mathbb{D}_{ r}}\}_{r>0}$  we can obtain infinitely many distinct  Nevanlinna/Ahlfors currents proportional to the extremal currents of integration along curves $[\{e_1\}\times E_2]$,  $[E_1\times \{e_2\}]$  for all $e_1\in E_1, e_2\in E_2$  simultaneously.
This phenomenon is new, and it shows 
tremendous holomorphic flexibility of entire curves in large scale geometry.

}

\section{\bf Introduction}

In 1967, Shoshichi Kobayashi introduced a pseudo-distance $d_X$ intrinsically associated with any complex manifold $X$. When  $d_X$ is a true distance, $X$ is called Kobayashi hyperbolic (cf.~\cite{Kobayashi-1998}). In the compact case, by Brody lemma~\cite{Brody-1978}, $X$ is Kobayashi hyperbolic if and only if it contains no entire curve, i.e., nonconstant holomorphic map from the complex line $\mathbb{C}$ into $X$.
Observing that most  Riemann surfaces are  hyperbolic, Kobayashi expected the same phenomenon for ``most'' higher dimensional complex manifolds~\cite{Kobayashi-1998}.
 In this direction, the leading problem  is the famous Green-Griffiths conjecture~\cite{Green-Griffiths}, which
 stipulates that for every complex projective
variety $X$ of general type, all entire curves $f: \mathbb{C}\rightarrow X$ shall be factored through certain  proper algebraic subvariety $Y\subsetneqq X$.

 Partially advertised by the philosophical analogy
 between the value distribution of entire curves in Nevanlinna theory and the  locations of rational points in  Diophantine geometry
(cf. Vojta's dictionary~\cite{Vojta-1999}),
 the Green-Griffiths conjecture has attracted  much attention in the past decades.
 One key technique, which was introduced by A. Bloch~\cite{Bloch-1926} about a century ago, is by using negatively twisted $k$-jet differentials $\omega_1, \omega_2, \omega_3,\dots$ on $X$ as obstructions for the existence of entire curves $f: \mathbb{C}\rightarrow X$, since $f$ must obey the ordinary differential equations $f^*\omega_1\equiv 0, f^*\omega_2\equiv 0, f^*\omega_3\equiv 0, \dots$. The reason is that the so called  {\sl fundamental vanishing theorem of entire curves} (cf. \cite{Green-Griffiths, Siu-1995, Ru-2021}).   
The existence of such
nontrivial negatively twisted $k$-jet differential forms was  established by Merker~\cite{Merker-2015}   for general type smooth hypersurfaces in projective spaces, 
and then by Demailly~\cite{Demailly-2011} for general type compact complex manifolds, both requiring $k\gg 1$. One natural idea, which goes back at least to~\cite{Green-Griffiths}, is  ``controlling'' the common zero loci defined by $\omega_1, \omega_2,  \omega_3, \dots$ in the $k$-jet space of $X$ for concluding the desired hyperbolicity.
However, in practice, such approach is very difficult. For instance, in the simplest case  $k=1$, there was a related conjecture of Debarre~\cite{Debarre-2005} that 
for general  $c\geqslant   N/2$ hypersurfaces $H_1, \dots, H_c \subset \mathbb{CP}^N$ of large
degrees $\gg 1$, the intersection $X:= H_1\cap \cdots\cap H_c$ shall have
ample cotangent bundle $T_X^*$, in particular, the negatively twisted $1$-jet differentials on $X$ are abundant. The first proof of the Debarre ampleness conjecture was established in~\cite{Xie-2018}, 
using explicit $1$-jet differentials obtained by Brotbek~\cite{Brotbek-2016} (see also another proof~\cite{Brotbek-Lionel} appeared shortly later).  
For Green-Griffiths conjecture on general hypersurfaces of large degrees in projective spaces, Siu introduced 
a strategy~\cite{Siu-ICM2002} of using
slanted vector fields (cf. \cite{Merker-2009}), which explores certain symmetry of $k$-jet spaces of universal hypersurfaces,    to facilitate  controlling base loci of negatively twisted $k$-jet differentials.
The existence  of negatively twisted $k$-jet differentials for suitably small $k$  can be
established by certain  Riemann-Roch calculations, 
 see \cite{DMR, Berczi-Kirwan} and references therein for the developments and state of the art.
  A breakthrough of Riedl and Yang~\cite{Riedl-Yang} discovered  that
  the Green-Griffiths conjecture for general hypersurfaces with smaller degrees can imply the
  Kobayashi hyperbolicity conjecture for general  hypersurfaces with larger degrees (compare with~\cite{Siu-2015, Brotbek-2017}).
%For Green-Griffiths conjecture and Kobayashi conjecture on {\em general large degrees hypersurfaces} in projective spaces, Siu introduced a strategy~\cite{Siu-ICM2002} of using slanted vector fields (cf. \cite{Merker-2009}). This approach has been successful,see \cite{DMR, Siu-2015,  Riedl-Yang, Berczi-Kirwan} and references therein for the developments and state of the art (see also~\cite{Brotbek-2017}). 
  
While  the Green-Griffiths conjecture for  $\geqslant 3$ dimensional varieties remains dubious, it is likely to be true for compact projective surfaces of general type. One strong evidence is the celebrated work~\cite{McQuillan-1998} of McQuillan, in which he verified the case of compact surfaces $X$ with Chern number inequality $c_1^2>c_2$ (see also~\cite{Lu-Yau}).
This extra condition guarantees the existence of one negatively twisted $1$-jet differential $\omega$, thus
inducing a multi-foliation $\mathcal{F}$ to which every entire curve $f: \mathbb{C}\rightarrow X$ must tangent. For establishing the algebraic degeneracy of $f$, McQuillan introduced {\sl Nevanlinna currents} for capturing asymptotic behavior of 
$f$ (see Section~\ref{Nevanlinna-currents}), and the overall strategy of~\cite{McQuillan-1998} is very deep and involved.
Later, Brunella~\cite{Brunella-1999} provided a simplified proof of McQuillan's result, a key
trick being to decompose every obtained Nevanlinna current $\mathsf{T}$ by  Siu's theorem as $\mathsf{T}=\mathsf{T}_{\text{alg}}+\mathsf{T}_{\text{diff}}$  (see Section~\ref{Nevanlinna-currents}).
In passing, Brunella raised the following two conjectures~\cite[page 200]{Brunella-1999}.

\begin{Conj}\label{Brunella conjecture 2}
    For every entire curve $f: \mathbb{C}\rightarrow X$,
    one can choose some increasing radii $\{r_i\}_{i\geqslant 1}\nearrow \infty$ such that
    the obtained Nevanlinna current $\mathsf{T}=\mathsf{T}_{\text{alg}}+\mathsf{T}_{\text{diff}}$ has either trivial  singular part $\mathsf{T}_{\text{alg}}= 0$ or trivial diffuse part $   \mathsf{T}_{\text{diff}}= 0$.
\end{Conj}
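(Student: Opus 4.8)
\emph{Strategy towards Conjecture~\ref{Brunella conjecture 2}.} The plan is to study the whole limit set of Nevanlinna currents attached to a fixed entire curve $f:\mathbb{C}\to X$ and to reduce the statement to a one-variable concentration dichotomy. Fix a Kähler form $\omega$, put $T_f(r)=\int_1^r\frac{dt}{t}\int_{\mathbb{D}_t}f^*\omega$, and for large $r$ let $\Phi_r:=\frac{1}{T_f(r)}\int_1^r\frac{dt}{t}\,[f_*(\mathbb{D}_t)]$, a positive $(1,1)$-current of unit mass whose boundary mass tends to $0$ (first main theorem estimate; see Section~\ref{Nevanlinna-currents}). Every weak-$\ast$ cluster value of $\Phi_r$ as $r\to\infty$ is then a closed positive Nevanlinna current, and I write $\mathcal{N}(f)$ for the set of all of them. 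Since $r\mapsto\Phi_r$ is a continuous path into the compact metrizable space of unit-mass positive currents and $T_f(r)\to\infty$, the set $\mathcal{N}(f)$ is nonempty, compact and \emph{connected}. If $f$ is not Zariski dense its image lies in an algebraic curve, every Nevanlinna current is then proportional to that curve and there is nothing to prove; so assume $f$ Zariski dense. Each $\mathsf{T}\in\mathcal{N}(f)$ is directed by the multi-foliation $\mathcal{F}$ to which $f$ is tangent, so in its Siu decomposition $\mathsf{T}=\mathsf{T}_{\mathrm{alg}}+\mathsf{T}_{\mathrm{diff}}$ the irreducible curves carrying $\mathsf{T}_{\mathrm{alg}}$ are all $\mathcal{F}$-invariant; and since $\mathcal{F}$ is of general type there are only finitely many such curves, say $C_1,\dots,C_N$. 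Hence $\mathsf{T}_{\mathrm{alg}}=\sum_{j=1}^{N}\lambda_j(\mathsf{T})\,[C_j]$ over a \emph{fixed} finite list, with $\lambda_j(\mathsf{T})$ the generic Lelong number of $\mathsf{T}$ along $C_j$, which is upper semicontinuous in $\mathsf{T}$.

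I would then encode the algebraic part as a bounded function of the radius. Using local psh potentials of the $[C_j]$ one defines continuous functions $\ell_j(r)\in[0,1]$ --- the normalized, $\frac{dt}{t}$-averaged amount of mass of $f$ concentrated along $C_j$ up to radius $r$ --- such that $\lambda_j(\mathsf{T})=\lim_i\ell_j(r_i)$ whenever $\mathsf{T}=\lim_i\Phi_{r_i}$ after passing to a subsequence. Setting $A(r):=\sum_{j=1}^{N}\ell_j(r)\in[0,1]$ one gets $\|\mathsf{T}_{\mathrm{alg}}\|=\lim_iA(r_i)$, and running this in both directions identifies the set of algebraic masses with the cluster set of $A$ at infinity:
\[
\{\,\|\mathsf{T}_{\mathrm{alg}}\|\ :\ \mathsf{T}\in\mathcal{N}(f)\,\}=\big[\,\liminf_{r\to\infty}A(r),\ \limsup_{r\to\infty}A(r)\,\big].
\]
Thus Conjecture~\ref{Brunella conjecture 2} is equivalent to the concentration dichotomy
\[
\liminf_{r\to\infty}A(r)=0\qquad\text{or}\qquad\limsup_{r\to\infty}A(r)=1 .
\]
In words: one must rule out a \emph{persistently mixed} entire curve, one whose Nevanlinna characteristic is split, at every large scale, in a fixed nontrivial proportion between a neighbourhood of the invariant curves and its complement.

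To establish the dichotomy I would pursue three routes, in increasing order of promise, each of which must use the general-type hypothesis on $\mathcal{F}$ in an essential way. A numerical route: $\mathsf{T}_{\mathrm{diff}}$ has vanishing Lelong numbers along curves, hence $\{\mathsf{T}_{\mathrm{diff}}\}$ is nef and $\{\mathsf{T}_{\mathrm{diff}}\}^2\geq0$, while McQuillan's tautological inequality gives $\{\mathsf{T}_{\mathrm{diff}}\}\cdot N_{\mathcal{F}}\leq0$ (cf.~\cite{McQuillan-1998}); combined with Brunella's index formula for the invariant $C_j$, this strongly constrains the classes $\{\mathsf{T}\}=\sum_j\lambda_j[C_j]+\{\mathsf{T}_{\mathrm{diff}}\}$ of members of $\mathcal{N}(f)$, and one hopes it forbids $A$ from clustering inside $(0,1)$. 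A dynamical route: the reparametrizations $z\mapsto sz$ act, through $T_{f_s}(r)=T_f(sr)-T_f(s)+O(1)$, on a completion of the space of $\mathcal{F}$-directed closed positive currents; $\mathcal{N}(f)$ is a compact invariant subset, and invoking Brunella's rigidity for directed currents of a general-type foliation one expects every minimal set of this action to consist of \emph{pure} currents, which $\mathcal{N}(f)$ must then meet. A local route, in the spirit of Duval's study of Ahlfors currents: near each $C_j$ an isoperimetric (area versus boundary length) estimate along the leaves of $\mathcal{F}$ should let one bootstrap ``$f$ spends a positive fraction of its time near $C_j$'' into ``along a suitable subsequence it spends almost all of it there'', i.e.\ a concentration-compactness alternative for foliated currents that pins $A(r)$ to an endpoint.

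The main obstacle is precisely this concentration step. The reductions above --- connectedness of $\mathcal{N}(f)$, $\mathcal{F}$-invariance and finiteness of the $C_j$, the passage to the scalar function $A(r)$ --- are essentially bookkeeping once Nevanlinna currents are in place; the genuine difficulty is that $A$ carries no monotonicity and a general invariant curve comes with no intrinsic attracting or repelling mechanism, so nothing obvious forces the mixing proportion to degenerate. Converting the general-type hypothesis on $\mathcal{F}$ into a true rigidity of the entire limit set $\mathcal{N}(f)$ is the crux, and it is why the conjecture has remained open. The constructions of the present paper are a cautionary backdrop here: an entire curve in an Oka manifold realizing \emph{all} Nevanlinna currents through its discs, and an entire curve in $E_1\times E_2$ producing infinitely many distinct curve-currents along \emph{concentric} discs, show that entire curves can apportion their Nevanlinna mass with extraordinary freedom, so any working argument must exploit general type non-trivially rather than through soft topology alone.
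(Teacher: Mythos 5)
There is no proof in the paper to compare against: Conjecture~\ref{Brunella conjecture 2} is stated as an open problem, and in fact the author writes that it ``is still open'' and that he ``believes that, by `Oka principle', certain counterexample shall exist.'' So the paper not only offers no argument in support of the statement, it tilts toward the statement being \emph{false}. Your submission is, by your own closing admission, a strategy sketch in which the decisive concentration dichotomy is left unresolved; as such it does not settle the conjecture, and given the author's stated expectation, the direction you have chosen (trying to prove it) is contrary to the evidence the paper itself is marshalling. Theorem~B in particular exhibits a single entire curve in $E_1\times E_2$ whose concentric discs produce mutually singular currents supported on infinitely many distinct fibres; this is exactly the sort of large-scale flexibility that makes a counterexample plausible and an affirmative proof unlikely to come from soft topology.

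Within the sketch there are also concrete gaps in the bookkeeping you describe as ``essentially automatic.'' First, not every sequence $r_i\nearrow\infty$ yields a \emph{closed} limit of $\Phi_{r}$: one must impose the length-area condition~\eqref{length-area condition}, and the Ahlfors lemma only guarantees the existence of \emph{some} good radii, not that all radii work. Hence the cluster set of the continuous path $r\mapsto\Phi_r$ contains non-closed currents, and the subset of closed ones (the actual Nevanlinna currents) need not be connected, undermining the step where you pass from connectedness to the claim that the possible algebraic masses fill an interval. Second, the generic Lelong numbers $\lambda_j(\mathsf{T})$, and therefore $\|\mathsf{T}_{\mathrm{alg}}\|$, are only \emph{upper} semicontinuous in the weak-$\ast$ topology; mass can drop onto the diffuse part in the limit, so the identity $\{\|\mathsf{T}_{\mathrm{alg}}\|:\mathsf{T}\in\mathcal{N}(f)\}=[\liminf A,\limsup A]$ does not follow from the continuity of $\ell_j(r)$ alone. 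Finally, all three of your proposed routes to the dichotomy are acknowledged to be speculative; none engages with the possibility, which the paper takes seriously, that the dichotomy simply fails for a cleverly constructed curve.
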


\begin{Conj}\label{Brunella conjecture 1}
 Some entire curve shall produce a Nevanlinna current $\mathsf{T}=\mathsf{T}_{\text{alg}}+\mathsf{T}_{\text{diff}}$ with both nontrivial singular part $\mathsf{T}_{\text{alg}}\neq 0$ and nontrivial diffuse part $   \mathsf{T}_{\text{diff}}\neq 0$.
\end{Conj}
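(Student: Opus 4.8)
\medskip\noindent\textbf{Proof strategy for Conjecture \ref{Brunella conjecture 1}.}
The plan is to produce, as a Nevanlinna/Ahlfors current of a single entire curve together with one well-chosen sequence of radii, a current of the shape
\[
\mathsf{T}\;=\;a\,[C]\;+\;b\,\omega,\qquad a,b>0,
\]
where $C$ is a \emph{smooth} compact holomorphic curve in the ambient manifold (for instance a line) and $\omega$ is a smooth closed positive $(1,1)$-form. Since the statement places no restriction on the ambient manifold, we are free to pick a convenient $X$, and once such a $\mathsf{T}$ is obtained the conclusion is immediate: the Lelong number of $\mathsf{T}$ equals $a$ along $C$ and $0$ off $C$, because $\omega$ is smooth; hence Siu's decomposition reads off $\mathsf{T}_{\text{alg}}=a\,[C]\neq 0$ and $\mathsf{T}_{\text{diff}}=\mathsf{T}-a[C]=b\,\omega\neq 0$, the latter being genuinely diffuse as it has identically vanishing Lelong numbers. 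Thus the entire problem reduces to realizing a \emph{mixed} current of this form from the holomorphic discs of an entire curve.

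For this I would run the machinery of the first part of the paper. Take $Y=\mathbb{CP}^n$ with $n\geqslant 2$ (which has the required Oka-type property and contains the line $C$), and let $\omega=\omega_{\mathrm{FS}}$. By the first main theorem the universal entire curve $f\colon\mathbb{C}\to Y$ already realizes, through $\{f\restriction_{\mathbb{D}(c,r)}\}$, every Nevanlinna/Ahlfors current of every entire curve into $Y$; in particular it realizes $[C]$ and $\omega$ separately. The point I would push is that the \emph{same} disc-gluing flexibility — Runge/Oka approximation and concatenation of holomorphic discs with prescribed mass allocation, exactly as used to build $f$ — produces an entire curve $h\colon\mathbb{C}\to Y$ whose normalized logarithmic disc averages $\frac{1}{\log r}\int_1^r\frac{dt}{t}\,[h(\mathbb{D}_t)]$ converge, along some $r_i\nearrow\infty$, to $a[C]+b\omega$: one alternates long stretches on which the discs wind along a branched cover of $C$ (contributing an $\asymp a$ share of the mass) with stretches on which they equidistribute for $\omega$ (contributing an $\asymp b$ share), and then $f$ realizes this Nevanlinna current too. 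Alternatively one may stay inside the torus picture and argue that the entire curve $g\colon\mathbb{C}\to E_1\times E_2$ of the second construction can be arranged to interleave radii on which $g\restriction_{\mathbb{D}_r}$ concentrates on a fibre $\{e_1\}\times E_2$ with radii on which it equidistributes for the flat Kähler form $\omega_{E_1\times E_2}$, yielding along a mixed sequence the current $\lambda\,[\{e_1\}\times E_2]+(1-\lambda)\,\omega_{E_1\times E_2}$; the Siu endgame of the first paragraph then applies verbatim.

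The step I expect to be the main obstacle is precisely this \emph{simultaneous} realization: forcing the algebraic and the diffuse behaviour to coexist in the limit along one sequence of radii, rather than merely along two separate sequences. Concretely one must choose the transition radii so that (i) the logarithmic Cesàro average does not drift away from the prescribed barycenter $a[C]+b\omega$ as $r$ grows, and (ii) no uncontrolled leakage of mass occurs that would inject extra positive Lelong numbers into the diffuse part or extra diffuse mass onto $C$ — either of which would spoil the clean Siu splitting above. Controlling (i)–(ii) requires the quantitative Oka-theoretic gluing lemmas (holomorphic discs with prescribed mass distribution and good boundary/overlap estimates) together with a careful accounting of the Ahlfors/Nevanlinna functionals across the concatenation; this is the technical heart of the argument, while the reduction to it and the final Siu-decomposition step are soft.
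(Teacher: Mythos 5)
Your Siu-decomposition reduction is correct: if an entire curve produces, along concentric radii $r_i\nearrow\infty$, a Nevanlinna current $\mathsf{T}=a[C]+b\omega$ with $C$ a smooth curve and $\omega$ a smooth closed positive $(1,1)$-form, then the generic Lelong number of $\mathsf{T}$ along $C$ is $a$ and vanishes elsewhere, so $\mathsf{T}_{\text{alg}}=a[C]\neq 0$ and $\mathsf{T}_{\text{diff}}=b\omega\neq 0$. The interleaving strategy you outline is in the same spirit as the paper's cited source~\cite{HX-2021} (stated here as Theorem~\ref{theorem HX21}), which in fact proves much more: a single entire curve whose concentric discs realize, on demand, any prescribed cardinality $|I|$ in the singular part and any choice of $\mathsf{T}_{\text{diff}}$ trivial or not. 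Two points to tighten. First, Conjecture~\ref{Brunella conjecture 1} concerns Nevanlinna currents of a single entire curve, which in the conventions of Section~\ref{Nevanlinna-currents} come from \emph{concentric} discs $\{f\restriction_{\mathbb{D}_{r_i}}\}$, whereas Theorem~A produces currents via \emph{off-center} discs $\{f\restriction_{\mathbb{D}(a,r)}\}$; so appealing to the universal curve of Theorem~A as ``already realizing $[C]$ and $\omega$'' is a detour that does not directly bear on the conjecture. Your proposed curve $h$, and the torus alternative, do work with concentric discs, which is the correct setting. Second, the claim that some concentric-disc sequence of an entire curve in $\mathbb{CP}^n$ converges to a positive multiple of $\omega_{\mathrm{FS}}$ needs its own justification; the torus picture is cleaner, since a line of irrational slope in $E_1\times E_2$ equidistributes to the flat K\"ahler form, and the machinery of Section~\ref{section 4} already gives quantitative control of mass concentration on fibers. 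You correctly isolate the technical heart — choosing the transition radii so that the logarithmic Nevanlinna averaging converges to the prescribed barycenter $a[C]+b\omega$ without drift or mass leakage — and, as you acknowledge, leave it open; so this is a valid sketch of the right strategy rather than a complete proof.
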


There are simplified analogues of Nevanlinna currents, called Ahlfors currents, by which Duval~\cite{Duval-2008} obtained a deep, quantitative Brody Lemma and characterized complex hyperbolicity in terms of linear isoperimetric
inequality for holomorphic discs (see also~\cite{Kleiner}). Using such currents, some geometric refinement~\cite{DH-2018} of the classical Cartan’s
Second Main Theorem, and some higher dimensional analogue~\cite{HV-2021} of Weierstrass-Casorati Theorem were
established. See also~\cite{Dinh-Sibony-2018, Dujardin-ICM} for recent key applications in complex dynamics. It is natural and fundamental to ask

\begin{Ques}\label{uniqueness question}
    Are all Nevanlinna/Ahlfors currents associated with the same entire curve cohomologically equivalent?
\end{Ques}

The conjecture~\ref{Brunella conjecture 2} (analogous to the ``zero--one law'' in probability) is still open,  and the author believes that, by ``Oka principle'',
certain counterexample shall exist.
The conjecture~\ref{Brunella conjecture 1} (answer: yes) and Question~\ref{uniqueness question} (answer: no)
were solved by constructing  exotic examples~\cite{HX-2021}.  The large-scale behaviors of entire curves may have flexibility  rather than rigidity.

\begin{Thm}[\cite{HX-2021}]
\label{theorem HX21}
    There exists an entire curve $f: 
    \mathbb{C}\rightarrow X$ such that, 
    given any cardinality $|I|\in \mathbb{Z}_+\cup \{\infty\}$ and any a priori requirement that $\mathsf{T}_{\text{diff}}$ is trivial or not, by taking certain increasing radii $\{r_i\}_{i\geqslant 1}$ tending to infinity, the sequence of holomorphic discs $\{f\restriction_{\mathbb{D}_{r_i}}\}_{i
    \geqslant 1}$ yields a Nevanlinna/Ahlfors current  $\mathsf{T}=\mathsf{T}_{\text{alg}}+\mathsf{T}_{\text{diff}}$ with the desired shape $\mathsf{T}_{\text{alg}}=\sum_{i\in I} \,\alpha_i\cdot[C_i]$ in Siu's decomposition.
\end{Thm}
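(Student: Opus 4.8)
The plan is to construct the entire curve $f\colon\mathbb{C}\to X$ explicitly, on a variety $X$ that simultaneously carries a rational curve (or more generally a configuration of curves $\{C_i\}$) and enough hyperbolic-like directions so that a diffuse part can be manufactured or suppressed at will. The natural candidate is a ruled surface, or a blow-up thereof, where one has a fibration $\pi\colon X\to B$ with $B$ hyperbolic: entire curves are then forced to be ``mostly vertical'' over large portions of $\mathbb{C}$, which is precisely the freedom one needs to dump mass onto the fiber classes $[C_i]$, while genuine horizontal excursions produce diffuse mass. I would first isolate a countable family of disjoint rational (or elliptic) curves $C_1, C_2, \dots$ inside $X$ whose classes are extremal in the nef cone, so that any positive closed current supported on $\bigcup C_i$ is automatically a nonnegative combination $\sum_i \alpha_i[C_i]$; then the target shape of $\mathsf{T}_{\mathrm{alg}}$ is realized simply by choosing how much ``time'' the curve $f$ spends winding around each $C_i$.

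Second, I would build $f$ by a \emph{patching / surgery} construction across a sequence of annuli $\{r_i < |z| < r_i'\}$ and disks. On the annulus between scale $r_i$ and $r_i'$ the curve $f$ re-parametrizes a large disc lying very close to (a tubular neighborhood of) $C_{\sigma(i)}$, covering it with high multiplicity; this contributes a big chunk of area concentrated near $[C_{\sigma(i)}]$ to the Nevanlinna characteristic $T_f(r)$ evaluated at $r=r_i'$. In between, over carefully chosen smaller-measure annuli, $f$ is allowed to travel (a transition/interpolation realized by a holomorphic map into $X$, using that the relevant neighborhoods are Stein or have enough holomorphic maps from annuli — here one leans on Runge/Oka-type approximation exactly as in the flavor of results cited, e.g.\ the Oka property discussed in the abstract). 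The bookkeeping is the standard one for Nevanlinna/Ahlfors currents: one shows that along the subsequence $r_i$ the normalized currents $\frac{1}{T_f(r_i)}\int_0^{r_i}\frac{dt}{t}[f(\mathbb{D}_t)]$ converge, that the transition annuli have asymptotically negligible $dd^c$-mass (so no spurious diffuse part appears when $|I|$ is prescribed with $\mathsf{T}_{\mathrm{diff}}=0$), and — for the case $\mathsf{T}_{\mathrm{diff}}\neq 0$ — that one additionally inserts a sparse but non-negligible sequence of genuinely transverse discs whose limit is a nonzero diffuse current, with the densities tuned so the ratio of diffuse-to-algebraic mass stabilizes.

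Third, the convergence to \emph{distinct} prescribed limits for different choices of $\{r_i\}$ is arranged by a diagonal/lacunarity argument: make the scales $r_i$ grow so fast (say $r_{i+1}\gg r_i^{k_i}$ with $k_i\to\infty$) that the mass accumulated in stage $i$ dominates everything before it, so that the limit current ``sees'' only the tail behavior one has programmed in, and by shifting which subsequence of radii one extracts, one reads off different combinations $\sum_{i\in I}\alpha_i[C_i]$. The normalization $T_f(r)$ must be controlled from above and below on the relevant radii — the upper bound from the total inserted area, the lower bound from the dominant stage — which is where one pins down the coefficients $\alpha_i$; rescaling $f$ or reindexing lets one hit any prescribed (finite or infinite, with summable-after-normalization) sequence of weights, and any a priori Boolean choice for the triviality of $\mathsf{T}_{\mathrm{diff}}$.

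The main obstacle, I expect, is controlling the \emph{transition regions} quantitatively: one must simultaneously (i) connect the high-multiplicity disc near $C_{\sigma(i)}$ to the one near $C_{\sigma(i+1)}$ by an honest holomorphic map on an annulus, (ii) keep the area (hence the $T_f$-contribution) of this transition asymptotically negligible relative to the plateaus so it does not pollute the limit current, and (iii) keep the curve $f$ globally well-defined and nonconstant with the patches glued holomorphically — this is a Mittag-Leffler/Runge-type gluing where error terms from the approximation must be beaten down geometrically fast, and getting the three requirements compatible for \emph{every} admissible $(|I|, \{\alpha_i\}, \text{diffuse or not})$ at once, with a single curve $f$, is the delicate combinatorial-analytic core. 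The auxiliary difficulty is ensuring extremality of the $[C_i]$ in the cone of currents so that no cross-terms or unwanted diffuse leakage can appear in Siu's decomposition; this forces a somewhat careful choice of $X$ (and possibly a sequence of blow-ups to separate the curves and make their classes rigid).
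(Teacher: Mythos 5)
This statement is imported verbatim from \cite{HX-2021}; the present paper does not prove it, so there is no internal proof to compare against. What I can do is assess your proposal on its own merits and against the closely related construction that \emph{is} carried out here (Section~\ref{section 4}, Theorem~B), which proceeds in the same spirit as \cite{HX-2021}.

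Your overall architecture --- building $f$ by Runge/Oka-type gluing over a lacunary sequence of annuli, dumping large area near a target curve $C_{\sigma(i)}$ on each plateau, keeping transition annuli of negligible relative mass, and extracting different subsequences of radii to read off different $\mathsf{T}_{\mathrm{alg}}$ --- is the right template, and the bookkeeping you describe (upper and lower control on $T_f(r)$ to pin down the coefficients $\alpha_i$, error terms decaying geometrically so the glued limit is honestly holomorphic) is precisely what has to be done. You also correctly isolate the genuine difficulty, namely realizing \emph{all} prescribed patterns with a \emph{single} curve, which forces an interleaving/encoding device.

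However, there is one fatal flaw in the setup, and it undercuts the rest. You propose to take $X$ a ruled surface (or blow-up) over a \emph{hyperbolic} base $B$, ``so that entire curves are forced to be mostly vertical.'' If $B$ is Kobayashi/Brody hyperbolic, then for any entire curve $f\colon\mathbb{C}\to X$ the composition $\pi\circ f\colon\mathbb{C}\to B$ is constant, hence $f$ lands in a \emph{single} fiber. In that case there is no transition between distinct $C_i$, no way to accumulate mass on more than one fiber class, and no room for a diffuse part: you would only ever obtain $\mathsf{T}$ proportional to a single $[C_{i_0}]$. The hyperbolicity you invoke to ``control'' the curve is exactly what makes the construction impossible. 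This is not a quantitative obstacle but an outright obstruction. The correct choice, implicit throughout this paper and explicit in its Section~\ref{section 4}, is the opposite: one wants $X$ to have an \emph{Oka-type} (in particular non-hyperbolic) base so that Runge/Oka approximation gives the flexibility to steer the entire curve between the various $C_i$ and to insert non-degenerate transverse pieces when a nontrivial diffuse part is required. The verticality of $f$ near each plateau is not \emph{forced} by any rigidity of $X$; it is \emph{arranged} by hand in the construction (e.g.\ by multiplying a component by an auxiliary function $1+\mathcal{X}^M$ that oscillates violently near one point and is almost $1$ elsewhere), while the ambient flexibility of $X$ is what lets you glue. Replace ``$B$ hyperbolic'' by ``$X$ Oka (e.g.\ a product of genus-$\leqslant 1$ curves, a rational surface, or a suitable projective bundle over such)'' and the rest of your plan becomes viable; as stated it cannot even get off the ground.

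A secondary, smaller remark: you do not actually need the classes $[C_i]$ to be extremal in the nef cone for Siu's decomposition to produce $\sum_i\alpha_i[C_i]$ --- Siu's theorem decomposes any positive closed $(1,1)$-current into integration currents with positive Lelong number plus a diffuse remainder with no further hypotheses on the ambient cohomology; the real work is proving that in the limit the mass away from $\bigcup_i C_i$ vanishes (or has the prescribed nonzero proportion), which is achieved by the quantitative ``area concentration'' and ``stability'' estimates rather than by a choice of $X$ making $[C_i]$ rigid.
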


In hindsight, the above result reflects Oka property of $X$ (cf. \cite{Oka-book}).

\medskip\noindent
{\bf Convention.} We denote   by $\mathbb{D}(a,r)$ 
(resp. $\overline{\mathbb{D}}(a,r)$)
the open (resp. closed) disc  centered at $a\in \mathbb{C}$ with radius $r$.
When $a=0$, we write $\mathbb{D}_r$  (resp. $\overline{\mathbb{D}}_r$)  for short of
$\mathbb{D}(0,r)$ 
(resp. $\overline{\mathbb{D}}(0,r)$).

\medskip

After receiving an early manuscript of
\cite{HX-2021}, 
Sibony~\cite{Sibony-email} asked

\begin{Ques}\label{Sibony-question}
	Show that for $Y=\mathbb{CP}^n$, or complex torus, all Ahlfors
	 currents can be obtained by single entire curve $f: \mathbb{C}\rightarrow Y$.	
\end{Ques}

Here Sibony might allow us to use any holomorphic disc $\{f\restriction_{ \mathbb{D}(a, r)}\}_{a\in \mathbb{C}, r>0}$ not necessarily centered at the origin.\footnote[1]{\,\,``(...) I suspect, that for $\mathbb{P}^n$, or torus, all the Ahlfors currents can be obtained just using one map, see the discussion on Birkhoff approach~\cite{Birkhoff} in the enclosed paper~\cite{Dinh-Sibony-2020} (...)''\,---\,Sibony~\cite{Sibony-email}} 
He also suggested a hint~\cite{Hormander}. 
Around the same time,  Yau 
asked the following question   during a seminar talk given by the author in Tsinghua.

\begin{Ques}
	\label{Yau-question}
	Can we construct an entire curve $g: \mathbb{C}\rightarrow E_1\times E_2$ in the product of two elliptic curves $E_1, E_2$ such that by using concentric holomorphic discs $\{g\restriction_{ \mathbb{D}_r}\}_{r>0}$ we can produce two Nevanlinna/Ahlfors currents supported in distinct irreducible algebraic curves $\mathcal{C}_1, \mathcal{C}_2$ respectively?
\end{Ques}

The motivation of the above question is wondering whether there exists any  entire curve
$f: \mathbb{C}\rightarrow X$ producing two distinct Nevanlinna/Ahlfors currents 
(with different cohomology classes)
supported on  distinct curves $C_1, C_2\subset X$ 
respectively. 
By a result~\cite{Duval-2006} of Duval,  $C_1$, $C_2$ must be either rational or elliptic.
The simplest such $X$ shall be the product of two  curves with genus $\leqslant 1$.
By using Weierstrass 
$\mathcal{P}$-functions mapping elliptic curves onto $\mathbb{CP}^1$, we only need to consider the case that
 $X=E_1\times E_2$ is the product of two  elliptic curves.

\begin{Def}
A connected complex manifold $Y$ with a complete distance function $\mathsf{d}_Y$ is
said to have the {\sl weak  Oka-1 property}, if for any two disjoint closed discs $D_1, D_2$
contained in a larger closed disc $D\subset \mathbb{C}$,
for
any holomorphic map $F: U\rightarrow Y$ from 
a neighborhood $U$ of $D_1\cup D_2$, for any error bound $\mathsf{e}>0$, there exists some holomorphic
map 
$\hat{F}: \hat{U}\rightarrow Y$ 
defined on some neighborhood $\hat{U}$ of $D$ 
which approximates $F$ on $D_1\cup D_2$ closely 
\[
\mathsf{d}_Y\,
(
\hat{F}(z), F(z)
)\,
\leqslant\,
\mathsf{e}
\qquad
{\scriptstyle(\forall\, z\,\in\, D_1\cup D_2)}.
\]
\end{Def}

Examples of such manifolds $Y$ include all Oka manifolds, e.g., $\mathbb{CP}^n$, complex tori, etc~\cite{Oka-book}. Recently, Alarc\'on and Forstneri\v{c}~\cite{Alarcon-Forstneric}  introduced a larger class of manifolds termed {\em Oka-1}, 
which clearly satisfy the above defined property.
A remarkable result in~\cite{Alarcon-Forstneric}
states that all rationally connected (e.g. Fano) manifolds are Oka-1.

\medskip\noindent
{\bf Theorem A.} 
{\it
Let $Y$  be a compact complex
manifolds satisfying the weak Oka-1 property. Then  there exists an
entire curve $f: \mathbb{C}\rightarrow Y$ generating all Nevanlinna/Ahlfors currents on $Y$
by  holomorphic discs  $\{f\restriction_{ \mathbb{D}(a, r)}\}_{a\in \mathbb{C}, r>0}$.
}

\medskip

Our Algorithm for  producing such $f$ also found applications 
in designing universal holomorphic/meromorphic functions in several variables with slow growth (cf.~\cite{CHX-2023, Guo-Xie-1}), hence solving an open problem~\cite[Problem 9.1]{Dinh-Sibony-2020} raised by Dinh and Sibony.

\medskip\noindent
{\bf Theorem B.} 
{\it
 There exists some entire curve $g: \mathbb{C}\rightarrow X$
in the  product
$X:=E_1\times E_2$ of two elliptic curves $E_1$ and $E_2$, such that the concentric
holomorphic discs  $\{g\restriction_{\mathbb{D}_r}\}_{r>0}$  can generate infinitely many distinct  Nevanlinna/Ahlfors currents
proportional to the extremal currents of integration along curves $[\{e_1\}\times E_2]$,  $[E_1\times \{e_2\}]$  for all $e_1\in E_1, e_2\in E_2$  simultaneously.
}

\medskip

This shows
striking  holomorphic flexibility of entire curves in large scale geometry. 
The phenomenon is new and complementary to the exotic examples of~\cite{HX-2021}.

\begin{Conj}
Let $X$ be a compact Oka manifold.
	There should be some entire curve $g: \mathbb{C}\rightarrow X$ such that  $\{g\restriction_{ \mathbb{D}_r}\}_{r>0}$  generate all Nevanlinna/Ahlfors currents on $X$.
\end{Conj}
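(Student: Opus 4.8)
The statement is a conjecture, so what follows is a strategy rather than a proof. The plan is to transplant the off-centre construction behind Theorem~A into a concentric one, isolating the single place where extra input beyond the bare weak Oka-1 property is needed.

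First I would reduce to a countable target. Fix a Hermitian form $\omega$ on $X$ and measure the \emph{mass} of a holomorphic disc $\varphi$ by $\int\varphi^*\omega$; normalising mass to $1$, the set $\mathcal N(X)$ of all Nevanlinna/Ahlfors currents on $X$ lies inside the weak-$*$ compact, metrisable set of closed positive $(1,1)$-currents of unit mass. Pick a metric $\mathsf{dist}$ for the weak-$*$ topology and a sequence $\{T_{(j)}\}_{j\ge1}$ dense in $\mathcal N(X)$. It then suffices to build one entire curve $g\colon\mathbb C\to X$, radii $R_1<R_2<\cdots\to\infty$, and a labelling $n\mapsto j_n$ in which every index occurs infinitely often, such that $\mathsf{dist}\bigl(\tfrac1{\mathrm{mass}}g_*[\mathbb D_{R_n}],\,T_{(j_n)}\bigr)\to0$ (for the Nevanlinna variant, the logarithmically averaged currents). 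Indeed each $T_{(j)}$ is then realised by $g$ along the subsequence of radii with $j_n=j$, and a diagonal argument — given $T_\infty\in\mathcal N(X)$ choose $j_m$ with $\mathsf{dist}(T_{(j_m)},T_\infty)<1/m$, then a radius $>m$ at which the normalised current is within $1/m$ of $T_{(j_m)}$ — shows every $T_\infty\in\mathcal N(X)$ is realised too; conversely any Nevanlinna/Ahlfors current of $g$ already belongs to $\mathcal N(X)$, so such $g$ generates precisely $\mathcal N(X)$.

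Next come the building blocks and the assembly. By definition each $T_{(j)}$ is the weak-$*$ limit of normalised currents of integration along holomorphic discs, and since the mass $\int_{\mathbb D_r}\varphi^*\omega$ of any nonconstant entire curve $\varphi$ tends to infinity with $r$, for every pair $(j,k)$ I can fix a holomorphic disc $h_{j,k}\colon\overline{\mathbb D}_{\rho_{j,k}}\to X$ whose normalised current is within $2^{-k}$ of $T_{(j)}$ and whose mass is as large as desired; precomposing with an affine map moves such a block onto any prescribed disc while preserving both its mass and its current on $X$. (One may take all $h_{j,k}$ from the single curve $f$ furnished by Theorem~A.) Now build $g=\lim_n g_n$ with $g_n\colon\overline{\mathbb D}_{R_n}\to X$ holomorphic, $\mathsf{d}_X(g_n,g_{n-1})\le\varepsilon_n$ on $\overline{\mathbb D}_{R_{n-1}}$ and $\sum\varepsilon_n<\infty$ (working with slightly enlarged nested domains, Cauchy estimates give $g_n^*\omega\to g^*\omega$ locally uniformly, so the normalised currents of the $\mathbb D_{R_m}$ pass to the limit). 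At stage $n$: take the block $h:=h_{j_n,k_n}$ with $\mathrm{mass}(h)$ enormous relative to $\mathrm{mass}(g_{n-1})$; place an affine copy of $h$ on a small disc $D_2\subset\mathbb D_{R_n}\smallsetminus\overline{\mathbb D}_{R_{n-1}}$; and apply the weak Oka-1 property with $D_1=\overline{\mathbb D}_{R_{n-1}}$, $D=\overline{\mathbb D}_{R_n}$ and $F$ equal to $g_{n-1}$ near $D_1$ and to this copy of $h$ near $D_2$, obtaining $g_n$ which $\varepsilon_n$-approximates $g_{n-1}$ on $D_1$ and the copy of $h$ on $D_2$. Splitting $(g_n)_*[\mathbb D_{R_n}]$ over $D_1$, $D_2$ and the complementary region $\mathbb D_{R_n}\smallsetminus(D_1\cup D_2)$, the first two masses are $\le\mathrm{mass}(g_{n-1})$ and $\approx\mathrm{mass}(h)$, and \emph{provided} the mass of $g_n$ on the third region is $o(\mathrm{mass}(h))$, the normalised current $\tfrac1{\mathrm{mass}}(g_n)_*[\mathbb D_{R_n}]$ is a convex combination with weights tending to $(0,1,0)$, hence within $2^{-k_n}+o(1)$ of $T_{(j_n)}$; choosing $k_n$ and then $R_n$ to grow fast enough completes the reduction.

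The hard part — and the reason this is only a conjecture — is the proviso just emphasised: the weak Oka-1 property yields \emph{some} holomorphic filling over $D$ with no control whatsoever on $\int_D(\text{filling})^*\omega$, whereas for concentric discs this mass is exactly most of what $\mathbb D_{R_n}$ integrates. What one really wants is a \emph{mass-controlled (isoperimetric) Oka-1 property}: the filling $\hat F$ over $D$ can be chosen with $\int_D\hat F^*\omega$ not much larger than $\int_{D_1\cup D_2}F^*\omega$. The obvious attempt to manufacture this from the bare property — tile the annulus $\mathbb D_{R_n}\smallsetminus\overline{\mathbb D}_{R_{n-1}}$ by many small discs carrying affine copies of blocks and run Oka--Weil on the resulting polynomially convex compactum — does not close the gap, since the approximant remains unconstrained on the small-area but potentially wild residual set; one must supplement it with an $L^\infty$ (or $L^2$) bound on the correction term off the compactum, forcing the residual mass to be negligible. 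For $X=E_1\times E_2$ this mass-controlled filling is precisely what the explicit product construction proving Theorem~B supplies: graphs of slowly varying holomorphic maps over one elliptic factor fill an annulus with nearly all of its mass concentrated in a prescribed direction. The conjecture asserts that an analogous quantitative flexibility holds for every compact Oka manifold, and establishing such a quantitative Oka-1 theorem is, in my view, the crux; the remaining points (cohomological compatibility of the target classes, convergence of the inductive scheme) are routine.
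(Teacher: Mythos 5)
The paper states this only as a conjecture, with no proof; so there is nothing to compare your argument against line by line. What you have written is a reduction scheme plus an honest identification of the missing ingredient, and your diagnosis does match the way the paper itself frames the problem: Theorem~A achieves ``all currents'' precisely because it is allowed to translate the centre $a$ of the disc $\mathbb{D}(a,r)$ onto whichever block it wants, whereas the concentric version is forced to integrate over the entire filled region, where the weak Oka-1 property gives no quantitative grip. Your isolation of a putative \emph{mass-controlled Oka-1 property} is the right thing to ask for, and it is indeed what the explicit $H_{\ell}=1+\mathcal{X}_{\ell}^{M_{\ell}}$ construction in the proof of Theorem~B supplies in the special case $X=E_1\times E_2$.

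Two small refinements worth recording. First, mass control over the filled region is not quite enough on its own: to stay inside the class of Nevanlinna/Ahlfors currents you also need the length-area ratios $L_{g_n}/T_{g_n}$ and $\mathrm{Length}_{\omega}(g_n(\partial\mathbb{D}_{R_n}))/\mathrm{Area}_{\omega}(g_n(\mathbb{D}_{R_n}))$ to tend to $0$ along the chosen radii, and the outer boundary $\partial\mathbb{D}_{R_n}$ lies entirely in the uncontrolled filled region. So the quantitative Oka property you want must control not just $\int_D\hat F^{*}\omega$ but also $\sup_{\partial D}\|\mathrm{d}\hat F\|$; in the paper's Theorem~B these are exactly conditions $(\heartsuit)$ and $(\diamondsuit)$, proved via the explicit estimate on $\mathcal{X}_{\ell}^{M_{\ell}}$ rather than via any soft Oka statement. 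Second, one should add to the ``routine'' list the stability requirement analogous to $(\clubsuit)$: because later stages perturb earlier ones by $\varepsilon_m$ on $V_n'$, the length-area and mass-concentration estimates at stage $n$ must be stable under all such future $\mathcal{C}^0$-small perturbations (which upgrade to $\mathcal{C}^1$-small on the compact sub-disc), and the $\varepsilon_n$ must be chosen after, not before, the stage-$n$ data. Neither of these changes your conclusion: the crux is exactly the quantitative filling you name, and the conjecture amounts to the assertion that every compact Oka manifold admits it.
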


%Based on the new insight and  method of this paper, in a forthcoming joint work with Hao Wu (NUS), we will  confirm the above conjecture for complex tori. Moreover, we will construct some entire curve $f: \mathbb{C}\rightarrow \mathbb{CP}^1\times E$, where $E$ is some elliptic curve,  such that $f$ produces not only singular Nevanlinna/Ahlfors currents  $\mathsf{T}=\mathsf{T}_{\text{alg}}+\mathsf{T}_{\text{diff}}$  of all possible  shapes as~Theorem~\ref{theorem HX21} (cf.~\cite{HX-2021}) but also yields purely diffuse ones with $\mathsf{T}_{\text{alg}}= 0$. In addition, for the case that $\mathsf{T}_{\text{alg}}\neq 0$, by Duval's result~\cite{Duval-2006}, $\mathsf{T}_{\text{alg}}$ writes   as $\sum_{j\in J} \beta_j\cdot [R_j]+\sum_{k\in K} \gamma_k\cdot [E_k]$ where all $R_j$'s (resp $E_k$'s) are distinct rational (resp. elliptic) curves, and where all coefficients $\beta_j, \gamma_k>0$. We will show that the same entire curve $f$ can exhaust all possibilities of nontrivial cardinality pairs $(|J|, |K|)\in (\mathbb{Z}_{\geqslant 0}\cup \{\infty\})^2\setminus \{(0, 0)\}$.

\begin{Rmk}
Concerning the Green-Griffiths conjecture  for low degree surfaces in $\mathbb{CP}^3$,
 one challenging problem is whether there exists any
smooth hyperbolic surface with degree $5$.
For degree $6$ and above, such examples exist~\cite{Duval-degree6,  Tuan-IMRN}
 by using Zaidenberg's deformation method~\cite{Zaidenberg-1989}.
 Another key open problem
 in the subject is whether hyperbolicity property preserves along large deformations, e.g., in the complement of certain pluripolar set in  parameter space. The classical Brody lemma only deals with small deformations. The two highlighted  problems shall shed light for the Green-Griffiths conjecture
 for general surfaces in $\mathbb{CP}^3$ with  degrees~$\geqslant 5$.
\end{Rmk}

Here is the structure of this paper. In  Section~\ref{Nevanlinna-currents}, we present the definitions of Nevanlinna/Ahlfors currents. In  Section~\ref{section 3}, we introduce an {\sl Algorithm} for Theorem A.
In Section~\ref{section 4}, 
we construct twisted entire curves to show Theorem B.
The insight comes from Oka theory and complex dynamics.
The construction is based on a sophisticated induction process ({\sl Ping Pong}),  using classical complex analysis in full strength, 
employing notably the Riemann mapping theorem, the little Picard theorem, and Runge's approximation theorem in every step.

\bigskip\noindent
{\bf Acknowledgments.}
 This work is motivated by
insightful questions of Sibony and  Yau, and is inspired by excellent lectures of Forstneri\v{c}.  Some ideas and results of this paper were presented in
 ``Oka Theory and Complex Geometry Conference''
 at Sophus Lie Center (June 2023) organized by Trung Tuyen Truong.  It is a  pleasure for the author to warmly thank Dinh Tuan Huynh for   discussions and collaborations over the years.
 He also thanks Hao Wu (NUS) for  stimulating conversations,
and  Yi C. Huang for careful reading of the manuscript.
  The author is very grateful to the referees for nice suggestions which largely improved the
exposition.   

\medskip\noindent
{\bf Funding.}
This work was supported  by 
National Key R\&D Program of China Grant
No.~2021YFA1003100 and  NSFC Grant No.~12288201.

%\bigskip\noindent
%{\bf Claim.}  There is no conflict of interest and there is no associated data in this work.

\section{\bf Nevanlinna and Ahlfors currents}
\label{Nevanlinna-currents}

Let $X$ be a compact complex manifold equipped with a Hermitian  form $\omega$.
Let $f:  \mathbb{D}_R\rightarrow X$ be a nonconstant holomorphic disc, smooth up to the boundary (i.e., $f$ is a restriction of holomorphic map defined on a neighborhood of $\overline{\mathbb{D}}_{R}$). We can associate  with $f$ a  positive current $T_f$ of bidimension $(1,1)$,  which evaluates 
every $\eta$ in 
the set $\mathcal{A}^{(1, 1)}(X)$ of smooth $(1,1)$–forms on $X$ by 
\[
T_f(\eta):=
\int_{0}^R\,\frac{\text{d}t}{t}\int_{\mathbb{D}_t}\,
f^*\eta.
\]
The reason of such definition roots in Jensen's formula.
Set
\[
L_f(\omega)
:=
\int_0^R\,
\text{Length}_{\omega}\,\big(f(\partial \mathbb{D}_t)\big)\,
\frac{\text{d}t}{t}.
\]
Let $\{f_i:  \mathbb{D}_{r_i}\rightarrow X\}_{i\geqslant 1}$ 
be a sequence of nonconstant holomorphic discs smooth up to the
	boundary, such that the {\sl length-area condition} holds
\begin{equation}
	\label{length-area condition}
	\lim_{i\rightarrow \infty}	
	\dfrac{L_{f_i}(\omega)}{T_{f_i}(\omega)}
	=
	0.
\end{equation}
 Consider the family of positive
currents of bounded mass $\{\Phi_{f_i}\}_{i\geqslant 1}$ defined as
\begin{equation}
    \label{define Phi}
\Phi_{f_i}(\eta):=
\dfrac{T_{f_i}(\eta)}{T_{f_i}(\omega)}
\qquad
{\scriptstyle(\forall\,i\,\geqslant\, 1;\,\,\forall\, \eta\,\in \mathcal{A}^{(1, 1)}(X))}.
\end{equation}
Then 
by Banach–Alaoglu’s theorem, some subsequence 
$\{\Phi_{f_{k_\ell}}\}_{\ell\geqslant 1}$
converges in weak topology to a positive current
$\Phi$. The condition~\eqref{length-area condition}
guarantees that $\Phi$ is in fact closed (cf.~\cite[page~55]{Huynh-PhD}). Such  a  limit current is called Nevanlinna current.

In particular, given an entire curve $f: \mathbb{C}\rightarrow X$, by Ahlfors lemma~(cf.~\cite[page~55]{Huynh-PhD}), one 
can find some increasing radii $\{r_i\}_{i\geqslant 1}$ tending to infinity, such that the sequence of restricted holomorphic discs $\{f\restriction_{\mathbb{D}_{r_i}}\}_{i\geqslant 1}$
satisfies the length-area condition~\eqref{length-area condition}.
Thus we can receive some Nevanlinna current associated with the entire curve $f$. 

The definition of Ahlfors currents
is likewise and simpler. Again we start with
 a sequence of nonconstant holomorphic discs $f_i:  \mathbb{D}_{r_i}\rightarrow X$ {\em smooth up to the
boundary}, satisfying another {\sl length-area condition} that
\begin{equation}
	\label{length-area-condition-Ahlfors}
\lim_{i\rightarrow \infty}\,	\dfrac{\text{Length}_{\omega}\big(f_i(\partial \mathbb{D}_{r_i})\big)}{\text{Area}_{\omega}\big(f_i( \mathbb{D}_{r_i})\big)}
=
 0.
\end{equation}
From the sequence of normalized currents
\begin{equation}
\label{Ahlfors currents}
\Big\{
	\frac{[f_i(\mathbb{D}_{r_i})]}{\text{Area}_{\omega}\big(f_i( \mathbb{D}_{r_i})\big)}
\Big\}_{i\geqslant 1}
\end{equation} of bounded mass,
by compactness and diagonal argument, 
after passing to some subsequence, in the limit 
one receives some positive  bidimension $(1, 1)$ {\sl Ahlfors current},   which is in fact closed because of the condition~\eqref{length-area-condition-Ahlfors}. 

In particular, given an entire curve $f: \mathbb{C}\rightarrow X$, by Ahlfors lemma (cf.~\cite{Duval-2017}), we
can find some increasing radii $\{r_i\}_{i\geqslant 1}\nearrow \infty$, such that the  sequence of restricted holomorphic discs $\{f\restriction_{\mathbb{D}_{r_i}}\}_{i\geqslant 1}$
satisfies the length-area condition~\eqref{length-area-condition-Ahlfors}.
Thus $f$ produces some Ahlfors current.

One advantage of
Nevanlinna currents 
over Ahlfors currents is that, the associated Nevanlinna currents
with an entire curve $f: \mathbb{C}\rightarrow X$ are always nef provided that $f$ is algebraically nondegenerate (cf.~\cite{McQuillan-ICM}).

Since every Nevanlinna/Ahlfors current $\mathsf{T}$ is positive and closed, 
by Siu's decomposition theorem~\cite{Siu-1974}, 
$\mathsf{T}=\mathsf{T}_{\text{alg}}+\mathsf{T}_{\text{diff}}$, 
where the singular part $\mathsf{T}_{\text{alg}}=\sum_{i\in I} \,\alpha_i\cdot[C_i]$
is some positive linear combination ($\alpha_i>0$; $I\subset \mathbb{Z}_+$, could be $\emptyset$) of currents of integration on distinct irreducible
algebraic curves $C_i$, and where the diffuse part $\mathsf{T}_{\text{diff}}$ is a positive closed $(1, 1)$–current having zero Lelong
number along any algebraic curve. A result~\cite{Duval-2006} of Duval shows that, if the singular part $\mathsf{T}_{\text{alg}}$ is nontrivial, then every irreducible component $C_i$ in the support of $\mathsf{T}_{\text{alg}}$  must be rational or elliptic.

\section{\bf Patching together infinite disjoint holomorphic discs} 
\label{section 3}
	Throughout this section, we fix a Hermitian form $\omega$ on $Y$, which induces a distance $\mathsf{d}$.  
The proof of Theorem~A
relies on the following two observations.

\begin{Obs}
\label{Observation 1}
   We can select some countable holomorphic discs $\{f_i: \mathbb{D}_{r_i}\rightarrow Y\}_{i\geqslant 1}$ smooth up to the boundary,  from which all Nevanlinna/Ahlfors currents on $Y$ can be generated.
\end{Obs}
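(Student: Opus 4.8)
The plan is to reduce the generation of \emph{all} Nevanlinna/Ahlfors currents on $Y$ to a countable problem by a density argument. First I would recall that the space of positive closed currents of bidimension $(1,1)$ with bounded mass on the compact manifold $Y$ is metrizable in the weak topology, hence separable: fix a countable dense subset $\{\mathsf{T}_k\}_{k\geqslant 1}$ of the (closed, convex) cone of normalized Nevanlinna/Ahlfors currents. Each $\mathsf{T}_k$ is, by definition, a weak limit of a sequence of normalized currents $\{\Phi_{g_{k,j}}\}_{j\geqslant 1}$ (resp. $\{[g_{k,j}(\mathbb{D}_{r_{k,j}})]/\mathrm{Area}_\omega(\cdots)\}$) attached to holomorphic discs $g_{k,j}$ smooth up to the boundary, satisfying the relevant length-area condition. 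Relabelling the doubly-indexed family $\{g_{k,j}\}_{k,j\geqslant 1}$ as a single sequence $\{f_i\}_{i\geqslant 1}$ gives a countable list of discs.

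Next I would argue that this countable family already suffices to generate every Nevanlinna/Ahlfors current. Given an arbitrary such current $\mathsf{T}$, pick a subsequence $\mathsf{T}_{k_m}\to \mathsf{T}$ from the dense set; for each $m$ pick $j_m$ large enough that $\Phi_{g_{k_m,j_m}}$ (which is one of our $f_i$) is within distance $1/m$ of $\mathsf{T}_{k_m}$ in the chosen metric and that the length-area ratio of $g_{k_m,j_m}$ is at most $1/m$. Then the diagonal sequence extracted from $\{f_i\}$ converges weakly to $\mathsf{T}$ and satisfies the length-area condition \eqref{length-area condition} (resp. \eqref{length-area-condition-Ahlfors}) in the limit, so it genuinely produces $\mathsf{T}$ as a Nevanlinna (resp. Ahlfors) current. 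One should be slightly careful that the length-area condition is a limiting statement about a \emph{sequence} of discs, not a property of a single disc, so the extraction must track both the weak approximation of $\mathsf{T}$ and the decay of the length-area quotient simultaneously; this is exactly what the two-parameter choice of indices $(k_m, j_m)$ above arranges.

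The main obstacle, and the only genuinely non-formal point, is ensuring that the countable family can be chosen with the discs \emph{smooth up to the boundary}: the limiting currents in the definitions come from such discs by hypothesis, but when we pass to the dense subset $\{\mathsf{T}_k\}$ we must check that no current in the closed cone is lost, i.e. that the closure of $\{$Nevanlinna currents produced by boundary-smooth discs$\}$ is all of the cone we care about. Since Nevanlinna/Ahlfors currents are \emph{by definition} limits of currents attached to boundary-smooth discs (see Section~\ref{Nevanlinna-currents}), this closure is automatic, and the argument goes through. A minor bookkeeping point is that the radii $r_i$ and base points may vary; since the definition already permits arbitrary discs $f_i\colon\mathbb{D}_{r_i}\to Y$, this causes no difficulty. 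In summary, Observation~\ref{Observation 1} follows from separability of the current cone plus a diagonal extraction, with no deep input beyond the definitions; the real work of Theorem~A lies in the subsequent observation, where these countably many disjointly-placed discs are patched into a single entire curve via the weak Oka-1 property.
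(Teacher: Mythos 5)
Your argument is correct, but it takes a genuinely different route from the paper's. The paper works at the level of the space of holomorphic discs: for each rational radius $r$ and integer $m$ it considers $\mathsf{Hol}(\mathbb{D}_r,Y)_m$, the discs with derivative bound $\lVert\mathrm{d}f\rVert_\omega\leqslant m$, proves this space is compact in the $\sup$-distance via Arzel\`a--Ascoli and Cauchy estimates (a Montel-type argument), extracts a countable dense subset $\mathcal{D}_{r,m}$, and sets $\mathcal{D}=\bigcup_{r\in\mathbb{Q}_+,\,m\in\mathbb{Z}_+}\mathcal{D}_{r,m}$, with an extra scaling trick to enforce smoothness up to the boundary. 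Any boundary-smooth disc is then approximable by elements of $\mathcal{D}$, which in turn makes the associated normalized currents approximable. You instead work at the level of the space of currents: you invoke metrizability and separability of the ball of positive currents in the weak topology, take a countable dense family $\{\mathsf{T}_k\}$ of Nevanlinna/Ahlfors currents, pull back to the defining witness sequences $\{g_{k,j}\}_j$, relabel, and run a diagonal extraction. Both are valid. The paper's approach is bottom-up and intrinsic to the disc spaces --- the family $\mathcal{D}$ is built without reference to the definition of Nevanlinna currents, which makes it simultaneously usable for both Nevanlinna and Ahlfors normalizations with a single construction, and gives something slightly stronger (every current of the form $\Phi_f$ for a boundary-smooth disc $f$ is approximated, not just the ones appearing in limiting sequences). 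Your approach is top-down and more purely definitional --- it avoids Montel/Arzel\`a--Ascoli entirely and replaces it with separability of the weak topology, at the cost of having to track the two normalizations in parallel and of making the chosen family depend on an a priori enumeration of witness sequences. You correctly identify and handle the one subtle point, namely that the length-area condition is a statement about sequences rather than individual discs, and that the two-parameter index $(k_m,j_m)$ must control the weak approximation and the length-area ratio simultaneously; this is precisely what needs care, and you do it right. One tiny overstatement: you parenthetically call the set of normalized Nevanlinna/Ahlfors currents a ``closed, convex cone,'' which is neither obvious nor needed --- separability of a subset of a separable metric space is automatic --- so it would be cleaner to drop that clause.
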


\begin{proof}
	For every $r>0$,
	denote by $\mathsf{Hol}(\mathbb{D}_r, Y)$  the set of holomorphic discs $f: \mathbb{D}_r\rightarrow Y$.
	The space  $\mathsf{Hol}(\mathbb{D}_r, Y)$ has a natural distance $\mathsf{d}_{r}$ which associates  $f, g: \mathbb{D}_r\rightarrow Y$ with the supremum deviation
	$$
	\mathsf{d}_{r}(f, g)
	:=
	\sup_{z\in \mathbb{D}_r}\, \mathsf{d}(f(z), g(z)).$$
	
	For any $m>0$,
	denote by $\mathsf{Hol}(\mathbb{D}_r, X)_{m}$ the subset of $\mathsf{Hol}(\mathbb{D}_r, Y)$ defined by  the uniform  derivative  bound $||\text{d}f||_{\omega}\leqslant m$.
	For any sequence of holomorphic discs $\{f_i\}_{i\geqslant 1}$ in $\mathsf{Hol}(\mathbb{D}_r, X)_{m}$,
	by mimicking the proof of Montel's theorem about normal families, i.e., by using the Arzel\`a-Ascoli theorem and Cauchy’s integral formula, 
	we can find some  subsequence $\{f_{n_i}\}_{i\geqslant 1}$  uniformly convergent to certain continuous map $f_{\infty}: \mathbb{D}_r\rightarrow Y$ which is in fact an element in $\mathsf{Hol}(\mathbb{D}_r, Y)_{m}$. 
	Summarizing, any infinite sequence of elements in $\mathsf{Hol}(\mathbb{D}_r, Y)_{m}$ has an accumulation point. Therefore, $\mathsf{Hol}(\mathbb{D}_r, Y)_{m}$ is compact for every $r, m>0$.
	Hence we can select a dense countable subset $\mathcal{D}_{r, m}$ of $\mathsf{Hol}(\mathbb{D}_r, X)_{ m}$ with respect to the distance $\mathsf{d}_{r}$.
	
	We try to use the union
	$$
	\mathcal{D}
	:=
	\cup_{r\in \mathbb{Q}_+,\, m\in \mathbb{Z}_+}\, \mathcal{D}_{r, m}
	$$
	as our desired family of countable holomorphic discs. 
	However, there is one  subtly, that an element
	$f: \mathbb{D}_r\rightarrow Y$ in $\mathcal{D}$ might not be smooth up to the boundary (i.e., $f$ is a restriction of holomorphic map defined on a neighborhood of $\overline{\mathbb{D}}_{r}$). The trick to remedy this is by replacing each $f\in \mathcal{D}$ by countably  holomorphic discs $\{f_{\ell}\}_{\ell\geqslant 1}$ obtained by  scaling
	\[
	f_{\ell}(z)
	:=
	f(\epsilon_\ell \cdot z)
	\qquad
{\scriptstyle(\ell\,\geqslant\, 1;\,\,\forall\, z\,\in \mathbb{D}_r)},
	\]
	where $\{\epsilon_\ell\}_{\ell\geqslant 1}\nearrow 1$, say $\epsilon_{\ell}=1-1/2^\ell$.
	The {\em new} $\mathcal{D}$, which remains countable,  suffices for our purpose.
	
The remaining argument goes as follows.
For any holomorphic disc
$f:  \mathbb{D}_{r}\rightarrow Y$ smooth up to the boundary, automatically
$||\text{d}f||_{\omega}$ is uniformly bounded from above.
The reparametrization of $f$ 
\[
f_{1}(\bullet):=
f(r\cdot \bullet)
\,
:\,
\mathbb{D}_{1}
\rightarrow
Y
\]
by scaling gives the same current 
$\Phi_{f_1}=\Phi_{f}$
by~\eqref{define Phi}.
Since $||\text{d}f_1||_{\omega}< \infty$,
${f_1}$
can be approximated by a sequence 
$\{f_{1, j}\}_{j\geqslant 1}$ in $\mathcal{D}$
 with respect to $\mathsf{d}_1$. 
Thus
 $\Phi_{f_1}=
 \lim_{j\geqslant 1}\, \Phi_{f_{1, j}}$.
This concludes the proof.
\end{proof}

\begin{Obs}
\label{Observation 2}
  Given countable holomorphic discs $\{f_i: \mathbb{D}_{r_i}\rightarrow Y\}_{i\geqslant 1}$ smooth up to the boundary,  we can construct
  an entire curve $f: \mathbb{C}\rightarrow Y$ such that
 every $f_i$ can be approximated at any precision by  holomorphic discs 
  of $f$, i.e., there exists some sequence $\{c_{i, \ell}\}_{\ell\geqslant 1}$ in $\mathbb{C}$ such that
  \[
  \lim_{\ell\rightarrow \infty}
  f(z+c_{i, \ell})
  =
  f_i(z)
  \qquad
  {\scriptstyle(\forall\, i\,\geqslant\, 1)}
  \]
  for all
  $z\in \mathbb{D}_{r_i}$
  uniformly.
\end{Obs}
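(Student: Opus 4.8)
\emph{Strategy and set-up.} The plan is to produce $f$ as the uniform-on-compacta limit of a sequence of holomorphic maps $F_n\colon U_n\to Y$, where $U_n$ is a neighbourhood of a compact exhaustion $\Delta_1\Subset\Delta_2\Subset\cdots$ of $\mathbb{C}$ by closed round discs centred at $0$; at the $(n{+}1)$-st step one glues a fresh translated copy of one of the $f_i$ onto $F_n$ by a single application of the weak Oka-1 property, arranging the error budget so that each approximation, once achieved, survives all later modifications. First I would list the required approximations as one sequence: fix a bijection $m\mapsto(i_m,\ell_m)$ of $\mathbb{Z}_+$ onto $\mathbb{Z}_+\times\mathbb{Z}_+$ in which every value of the first coordinate occurs infinitely often, and set target precisions $\delta_m:=2^{-\ell_m}$, so that $\delta_m\to 0$ along $\{m:i_m=i\}$ for each fixed $i$. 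Choose $\varepsilon_m>0$ with $\sum_{m\geqslant 1}\varepsilon_m<\infty$ and $\sum_{k>m}\varepsilon_k<\delta_m$ for all $m$. On the source side, pick pairwise disjoint closed round discs $D_m:=\overline{\mathbb{D}}(c_m,r_{i_m}+1)\subset\mathbb{C}$ and closed round discs $\Delta_n$ centred at $0$ with $\Delta_n\subset\mathring{\Delta}_{n+1}$, with $D_1,\dots,D_n\subset\Delta_n$, with $\Delta_n\cap D_{n+1}=\varnothing$, and with radii tending to $\infty$; this is arranged by pushing the centres $c_m$ far enough out at each stage. Since each $f_i$ is smooth up to the boundary it extends holomorphically past $\overline{\mathbb{D}}_{r_i}$, so $g_m(z):=f_{i_m}(z-c_m)$ is holomorphic on a neighbourhood $\widetilde{D}_m$ of $D_m$.

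\emph{The induction.} Put $F_1:=g_1$ on a neighbourhood $U_1$ of $\Delta_1=D_1$. Given $F_n\colon U_n\to Y$ holomorphic on a neighbourhood $U_n$ of $\Delta_n$, shrink $U_n$ and $\widetilde{D}_{n+1}$ so that they are disjoint (possible since $\Delta_n\cap D_{n+1}=\varnothing$); then $F_n$ and $g_{n+1}$ together constitute a single holomorphic map $G_{n+1}$ on a neighbourhood of $\Delta_n\cup D_{n+1}$. Apply the weak Oka-1 property with the inner discs $\Delta_n$, $D_{n+1}$ inside the larger disc $\Delta_{n+1}$, with the datum $G_{n+1}$, and with error bound $\min(\varepsilon_{n+1},\delta_{n+1})$: this yields a holomorphic $F_{n+1}\colon U_{n+1}\to Y$ on a neighbourhood of $\Delta_{n+1}$ satisfying $\mathsf{d}(F_{n+1},F_n)\leqslant\varepsilon_{n+1}$ on $\Delta_n$ and $\mathsf{d}\big(F_{n+1}(c_{n+1}+z),f_{i_{n+1}}(z)\big)\leqslant\delta_{n+1}$ for all $z\in\overline{\mathbb{D}}_{r_{i_{n+1}}}$.

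\emph{Passing to the limit.} Every compact $K\subset\mathbb{C}$ lies in some $\Delta_n$, and then $\{F_m|_K\}_{m\geqslant n}$ is uniformly Cauchy because $\mathsf{d}(F_{m+1},F_m)\leqslant\varepsilon_{m+1}$ there; by completeness of $\mathsf{d}$ these converge uniformly, and the local limits patch into a map $f\colon\mathbb{C}\to Y$, holomorphic (work in coordinate charts, into which the $F_m$ eventually map on a given compact) and nonconstant (it approximates the nonconstant $g_1$ uniformly on $D_1$ once $\sum_{k\geqslant 2}\varepsilon_k$ is small). For each $m$ and $z\in\overline{\mathbb{D}}_{r_{i_m}}$, the triangle inequality gives $\mathsf{d}\big(f(c_m+z),f_{i_m}(z)\big)\leqslant\sum_{k>m}\varepsilon_k+\delta_m<2\delta_m$. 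Setting $c_{i,\ell}:=c_m$ for the $m$ with $(i_m,\ell_m)=(i,\ell)$ and letting $\ell\to\infty$ along $\{m:i_m=i\}$, this bound tends to $0$, which is precisely the asserted uniform convergence $f(z+c_{i,\ell})\to f_i(z)$ on $\mathbb{D}_{r_i}$.

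\emph{Main obstacle.} The genuine difficulty is not any single estimate but the simultaneous coordination of the induction: keeping $\Delta_n$ and $D_{n+1}$ disjoint yet both nested inside $\Delta_{n+1}$ so that the weak Oka-1 property literally applies (its hypothesis being a holomorphic map on a neighbourhood of the disjoint union of two closed discs inside a third), forcing $\Delta_n\nearrow\mathbb{C}$, and tuning the sequence $(\varepsilon_m)$ against $(\delta_m)$ so that the approximation secured at step $m$ is protected against the infinitely many later perturbations. Once these quantifiers are made consistent, the extendability of each $f_i$ past its boundary, the holomorphy of a uniform limit into the manifold $Y$, and the final triangle-inequality estimate are all routine.
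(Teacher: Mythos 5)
Your proof is correct and takes essentially the same approach as the paper: repeat each $f_i$ infinitely often via a bijection $\mathbb{Z}_+\to\mathbb{Z}_+\times\mathbb{Z}_+$, translate copies to pairwise-disjoint discs marching to infinity, glue inductively by the weak Oka-1 property with a summable error budget, and conclude via a tail estimate in the limit. The only cosmetic difference is your explicit extra layer of target precisions $\delta_m$ protecting each step against later perturbations, which the paper avoids by simply observing that the tail $\sum_{j\geqslant i}\epsilon_j\to 0$ along the infinite subsequence where $f_i$ reappears.
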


\begin{proof}
Fix a  bijection
$
\varphi=(\varphi_1, \varphi_2)\,
:\,
\mathbb{Z}_+
\overset{\sim}{\longrightarrow}
\mathbb{Z}_+ \times \mathbb{Z}_+$.
Take a sequence of shrinking  positive numbers $\{\epsilon_i\}_{i\geqslant 1}$, say $\epsilon_i=1/2^i$,  such that 
$
\lim_{j\rightarrow \infty} \sum_{i\geqslant j} \epsilon_i=0$. 
We 
make another sequence of holomorphic discs $\{g_j : \mathbb{D}_{\hat{r}_j}\rightarrow Y
\}_{j\geqslant 1}$
such that each $f_i$ repeats infinitely many times in $\{g_j\}_{j\geqslant 1}$,
for instance
we can take
$
g_j:=f_{\varphi_1(j)}$, 
$
	\hat{r}_j
	:=
	r_{\varphi_1(j)}$. 
Now we construct a desired entire curve $f$ by taking the limit of some convergent holomorphic discs
$
\{F_i: \mathbb{D}_{R_i}\rightarrow Y\}_{i\geqslant 1}$ as
$
R_i \nearrow \infty$.

\smallskip

In {\em Step $1$}, we set three key data $F_1:=g_1$, $R_1:=\hat{r}_1$, $c_1:=0$.

Subsequently, in {\em Step $i+1$} for $i=1, 2, 3, \dots$,
we choose a center $c_{i+1}\in \mathbb{C}$ far away from the origin, so that
the two discs
$\overline{\mathbb{D}}_{R_i}$ and
$\overline{\mathbb{D}}(c_{i+1}, \hat{r}_{i+1})$ 
stay apart.
We define  $\hat{g}_{i+1}(\bullet):=g_{i+1}(\bullet-c_{i+1})$ on a neighborhood of $\overline{\mathbb{D}}(c_{i+1}, \hat{r}_{i+1})$ 
by translation of $g_{i+1}$. Now
we take a large disc $\overline{\mathbb{D}}_{R_{i+1}}$ 
having radius 
$
R_{i+1}>|c_{i+1}|+\hat{r}_{i+1}$, so that it 
contains
$\overline{\mathbb{D}}_{R_i}$ and $\overline{\mathbb{D}}(c_{i+1}, \hat{r}_{i+1})$ in the interior.
By the assumed weak Oka-1 property of $Y$,
we can ``extend'' $F_i$ and $\hat{g}_{i+1}$ to some holomorphic disc $F_{i+1}$ 
defined in a neighborhood of $\overline{\mathbb{D}}_{R_{i+1}}$ such that
\begin{align}
	\label{small errors}
	\sup_{z\in \overline{\mathbb{D}}_{R_i}}\,\mathsf{d}(F_{i+1}(z), F_i(z))
	\leqslant \epsilon_{i+1},
	\qquad
	\sup_{z\in \overline{\mathbb{D}}(c_{i+1}, \hat{r}_{i+1})}
	\mathsf{d}(F_{i+1}(z), \hat{g}_{i+1}(z))
	\leqslant \epsilon_{i+1}.
\end{align}
Since $
\lim_{j\rightarrow \infty} \sum_{i\geqslant j} \epsilon_i=0$, it is clear that
$\{F_i\}_{i\geqslant 1}$
converges to an
entire curve
$f: \mathbb{C}\rightarrow Y$. Moreover,  by using~\eqref{small errors} repeatedly, we see that $f$  approximates each $\hat{g}_{i+1}$ well

\begin{align*}
	\sup_{z\in \overline{\mathbb{D}}(c_{i+1}, \hat{r}_{i+1})}
	\mathsf{d}(f(z), \hat{g}_{i+1}(z))
	&
	\leqslant
	\sup_{z\in \overline{\mathbb{D}}(c_{i+1}, \hat{r}_{i+1})}
\mathsf{d}(F_{i+1}(z), \hat{g}_{i+1}(z))
	+
	\sum_{j\geqslant i+1}
	\sup_{z\in \overline{\mathbb{D}}_{R_j}}\,\mathsf{d}(F_{j+1}(z), F_j(z))
	\nonumber
	\\
	&
	\leqslant
	\epsilon_{i+1}+
	\sum_{j\geqslant i+1}
	\epsilon_{j+1}
	\,\,
	\rightarrow
	0
	\qquad
	{\scriptstyle
		(\text{as } i\, \rightarrow\, \infty)}.
\end{align*}
In other words, 
$\sup_{
z\in
\overline{\mathbb{D}}_{\hat{r}_{i+1}}
}\,
\mathsf{d}
\big(
f(c_{i+1}+ z), g_{i+1}(z)
\big)
\rightarrow
0$ 	as
$i\rightarrow\, \infty$. 
This finishes the proof.
\end{proof}

The above  Algorithm for 
constructing $f$  also plays a
 key r\^ole
in the solutions~\cite{CHX-2023, Guo-Xie-1} to an open problem~\cite[Problem 9.1]{Dinh-Sibony-2020} raised by Dinh and Sibony.

\medskip\noindent
{\bf Proof of Theorem A.}
Combining Observations~\ref{Observation 1} and~\ref{Observation 2}, 
 Theorem~A follows
  directly. 
  \qed

\begin{Rmk}
In fact, a moment of reflection shows that,  the obtained entire curve $f: \mathbb{C}\rightarrow Y$ in the above construction 
  is actually universal in the sense of~\cite{Kusakabe-2017, Guo-Xie-1}.
\end{Rmk}

\begin{Ques}
Is there a compact complex manifold with the weak Oka-1 property but is not Oka-1 (cf.~\cite{Alarcon-Forstneric})?
\end{Ques}

\section{\bf Twisted entire curves in the product of two elliptic curves}
\label{section 4}

Given two elliptic curves $E_1=\mathbb{C}/\Gamma_1$ and $E_2=\mathbb{C}/\Gamma_2$ where $\Gamma_1, \Gamma_2$ are two lattices in $\mathbb{C}$. 
Let  $\pi_1: \mathbb{C}\rightarrow E_1$,  $\pi_2: \mathbb{C}\rightarrow E_2$ and $\pi: \mathbb{C}\times \mathbb{C}\rightarrow E_1\times E_2=X$ be the
canonical projections. 
The standard Euclidean norm $||\bullet||_{\mathbb{C}^2}$ on $\mathbb{C}^2$ corresponds to the positive $(1, 1)$-form 
$\frac{\sqrt{-1}}{2}(\text{d}z_1\wedge
\text{d}\bar{z}_1+\text{d}z_2\wedge
\text{d}\bar{z}_2)$, where
 $(z_1, z_2)$ are the usual coordinate functions  of $\mathbb{C}^2$.  Descending via $\pi$,  
we receive a reference Hermitian form $\omega$
 on $X$.

For every $i=1, 2$,
we fix some countable everywhere dense sequence
$\{e_{i,\ell}\}_{\ell \geqslant 1}\subset E_i$.  
Our desired entire curve will write as $g:=\pi\circ F$, where $F=(G_1, G_2): \mathbb{C}\rightarrow \mathbb{C}\times \mathbb{C}$
will be obtained 
by taking the limit of some convergent holomorphic discs 
\begin{equation}
\label{holo discs are convergent}
F_\ell=(G_{1, \ell}, G_{2, \ell}): \mathbb{D}_{R_{\ell}}\rightarrow \mathbb{C}\times \mathbb{C},
\quad
R_\ell \nearrow \infty
\qquad
{\scriptstyle
	(\ell\,=\,0,\,1,\, 2,\, \dots)}.
\end{equation}

In {\em Step $0$}, we set $R_0=1$, and choose two nonconstant holomorphic functions $G_{1, 0}$, $G_{2, 0}$ defined on a neighborhood $V_0$ of the unit disc $\mathbb{D}_1$.

Subsequently, in {\em Step $2k-1$}
and  {\em Step $2k$} for $k=1, 2, 3, \dots$, 
using Runge's approximation theorem~(cf. e.g.~\cite[page~94]{Gaier}) upon the holomorphic functions $G_{1, 2k-2}$ and $G_{2, 2k-2}$ defined on $V_{{2k-2}}\supset \overline{\mathbb{D}}_{R_{2k-2}}$, for both $i=1, 2$, we approximate
$G_{i, 2k-2}$ on
a smaller neighborhood $V_{2k-2}'\subset\subset V_{2k-2}$ (i.e.,  relatively compact)
of 
$\overline{\mathbb{D}}_{R_{2k-2}}$
by some nonconstant entire functions
$\hat{G}_{i, 2k-2}$ within small error $\epsilon_{2k-2}/2023$,
so that 
$\hat{F}_{2k-2}:=(\hat{G}_{1, 2k-2}, \hat{G}_{2, 2k-2})$ is very close to $F_{2k-2}$ on $V_{2k-2}'$
\[
||\hat{F}_{2k-2}(z)-F_{2k-2}(z)||_{\mathbb{C}^2}\leqslant 2\epsilon_{2k-2}/2023
\qquad
{\scriptstyle
	(\forall\,z\,\in\,V_{2k-2}')}.
\]

\medskip\noindent
{\bf Plan of Step $2k-1$}.
To choose some large radius $R_{2k-1}>R_{2k-2}+1$ and then to modify
$\hat{F}_{2k-2}$ as
${F}_{2k-1}
:=(\hat{G}_{1, 2k-2}, \hat{G}_{2, 2k-2}\cdot H_{2k-1})$
by some auxiliary holomorphic
function $H_{2k-1}$ defined on a neighborhood of $\overline{\mathbb{D}}_{R_{2k-1}}$,
such that
${F}_{2k-1}$ is very close to $\hat{F}_{2k-2}$ (hence ${F}_{2k-2}$) on $V_{2k-2}'$
\begin{equation}
\label{small error 2023}
||{F}_{2k-1}(z)-\hat{F}_{2k-2}(z)||_{\mathbb{C}^2}
\leqslant \frac{\epsilon_{2k-2}}{2023},\,
||{F}_{2k-1}(z)-{F}_{2k-2}(z)||_{\mathbb{C}^2}
\leqslant \frac{3\epsilon_{2k-2}}{2023}
\qquad
{\scriptstyle
	(\forall \,z\ \in\ V_{2k-2}')},
\end{equation}
while
the induced current $\Phi_{\pi\circ {F}_{2k-1}\restriction_{\mathbb{D}_{R_{2k-1}}}}$ for constructing Nevanlinna currents (see~\eqref{define Phi}) 
and the current
$\frac{[\pi\circ {F}_{2k-1}(\mathbb{D}_{R_{2k-1}})]}{\text{Area}(\pi\circ {F}_{2k-1}(\mathbb{D}_{R_{2k-1}}))}$
for constructing Ahlfors currents (see~\eqref{Ahlfors currents}) both
have mass concentrated near $\{e_{1, k}\}\times E_2$ in a quantitative sense to be specified later.

\medskip\noindent
{\bf Observation 1.} 
By the little Picard theorem,
the range of the nonconstant entire function
$\hat{G}_{1, 2k-2}$  cannot avoid the set
$\pi_1^{-1}(e_{1, k})\setminus \hat{G}_{1, 2k-2}(\overline{\mathbb{D}}_{R_{2k-2}+1})$ which contains infinitely many points!
Hence we can find some  $z_{2k-1}\in \mathbb{C}\setminus \overline{\mathbb{D}}_{R_{2k-2}+1}$
with $\hat{G}_{1, 2k-2}(z_{2k-1})\in \pi_1^{-1}(e_{1, k})$.
The upshot is that
\[
\pi\circ {F}_{2k-1}(z_{2k-1})
=
\big
(\pi_1\circ\hat{G}_{1, 2k-2}(z_{2k-1}),
\pi_2\circ(\hat{G}_{2, 2k-2}\cdot H_{2k-1})(z_{2k-1})
\big)
=
(e_{1, k}, *)
\]lies in the curve $\{e_{1, k}\}\times E_2$
no matter how we choose
$H_{2k-1}$. 

\medskip\noindent
{\bf Idea.}
Fix a very small neighborhood $U_{2k-1}$ of $z_{2k-1}$ such that
$\pi_1\circ\hat{G}_{1, 2k-2}$ takes value
in a small neighborhood of $\pi_1\circ\hat{G}_{1, 2k-2}(z_{2k-1})=e_{1, k}$,
say 
\[
\pi_1\circ\hat{G}_{1, 2k-2}(U_{2k-1})\,
\subset\,
\mathbb{D}(e_{1, k}, 2^{-(2k-1)-2023}):=
\pi_1
\big(
\mathbb{D}(\hat{G}_{1, 2k-2}(z_{2k-1}), 2^{-(2k-1)-2023})
\big).
\]
Now we demand that
$H_{2k-1}$ is almost $1$
on $\overline{\mathbb{D}}_{R_{2k-1}}\setminus U_{2k-1}$ whereas
$H_{2k-1}$ oscillates violently
on $U_{2k-1}$ so that the currents
$\Phi_{\pi\circ {F}_{2k-1}\restriction_{\mathbb{D}_{R_{2k-1}}}}$ and $\frac{[\pi\circ {F}_{2k-1}(\mathbb{D}_{R_{2k-1}})]}{\text{Area}(\pi\circ {F}_{2k-1}(\mathbb{D}_{R_{2k-1}}))}$ 
have most mass near $\{e_{1, k}\}\times E_2$
in a quantitative sense to be specified later.

\medskip

\medskip\noindent
{\bf Observation 2.} 
Set $R_{2k-1}:=|z_{2k-1}|>R_{2k-2}+1$.
If we can find some
holomorphic function
$\mathcal{X}_{2k-1}$ defined on a neighborhood $V_{2k-1}$ of 
$\overline{\mathbb{D}}_{R_{2k-1}}$
such that
$|\mathcal{X}_{2k-1}|<1$ on $\overline{\mathbb{D}}_{R_{2k-1}}\setminus U_{2k-1}$ whereas
$|\mathcal{X}_{2k-1}(z_{2k-1})|>1$, then 
$H_{2k-1}:=1+\mathcal{X}_{2k-1}^{M_{2k-1}}$ for some very large integer $M_{2k-1}\gg 1$ will suffice for the goal of  {\em Step $2k-1$}. 

\medskip
The existence of such $\mathcal{X}_{2k-1}$ is guaranteed by the following

\medskip\noindent
{\bf Key Lemma.} 
{
\it 
Let $z_0\in \partial \mathbb{D}_R$, and let $U$ be a neighborhood of $z_0$. Then one can find some holomorphic function $\mathcal{X}$ defined in a neighborhood of $\overline{\mathbb{D}}_R$ such that $|\mathcal{X}|<1$ on $\overline{\mathbb{D}}_R\setminus U$ while $|\mathcal{X}(z_0)|>1$.
}

\medskip
The proof will be postponed to the  end of this section.
Now we check the  conditions~\eqref{length-area condition} and~\eqref{length-area-condition-Ahlfors}.

\medskip
\noindent
{\bf Length-Area  Estimate (I).} 
Fix $k\in \mathbb{Z}_+$.
For the family of
holomorphic discs 
$$\overline{F}_{2k-1}:=\pi\circ {F}_{2k-1} : {\mathbb{D}_{R_{2k-1}}}\longrightarrow X$$
parameterized by 
$M_{2k-1}\in \mathbb{Z}_+$,
one has
\begin{equation}
    \label{key estimate}
	\dfrac{L_{\overline{F}_{2k-1}}(\omega)}{T_{\overline{F}_{2k-1}}(\omega)}
	\rightarrow
	0
	\qquad
	\text{as $M_{2k-1}$ tends to infinity}.
\end{equation}

\begin{proof}
Denote by $m_{2k-1}>1$ the maximum modulus of 
$|\mathcal{X}_{2k-1}|$ over $\overline{\mathbb{D}}_{R_{2k-1}}$.
The set
\begin{equation}
\label{the tricky small disck}
\{
z\in \mathbb{D}_{R_{2k-1}}
\,:\,
1<
m_{2k-1}^{2/3}
<
|\mathcal{X}_{2k-1}(z)|
<
m_{2k-1},\,
\hat{G}_{2, 2k-2}\cdot \mathcal{X}_{2k-1}'(z)\neq 0
\}
\end{equation}
is clearly open and nonempty.  Take a small closed disc $\overline{\mathbb{D}}(c_{2k-1}, r_{2k-1})$ in it.
Let $\widetilde{m}_{2k-1}>0$
be the minimum modulus of $|\hat{G}_{2, 2k-2}\cdot \mathcal{X}_{2k-1}'|$
on $\overline{\mathbb{D}}(c_{2k-1}, r_{2k-1})$.
Now we compute the derivative
\begin{align}
\nonumber
(\hat{G}_{2, 2k-2}\cdot H_{2k-1})'
&
=
\big(\hat{G}_{2, 2k-2}\cdot (1+\mathcal{X}_{2k-1}^{M_{2k-1}})\big)'
\\
\label{compute derivative}
&
=
M_{2k-1}\cdot
 \mathcal{X}_{2k-1}^{M_{2k-1}-1}\cdot
 \hat{G}_{2, 2k-2}\cdot \mathcal{X}_{2k-1}'
 \,
+\,
\hat{G}_{2, 2k-2}'\cdot
(1+ \mathcal{X}_{2k-1}^{M_{2k-1}}).
\end{align}

Using the elementary inequality
$
|A+B|^2
\geqslant 
\frac{1}{4}\cdot
|A|^2
-
|B|^2
$
for all complex numbers
$A,  B$, 
we can estimate  $\overline{F}_{2k-1}^*\,\omega\geqslant
\overline{F}_{2k-1}^*(\frac{\sqrt{-1}}{2}\text{d}z_2\wedge
\text{d}\bar{z}_2)
$ on
$\overline{\mathbb{D}}(c_{2k-1}, r_{2k-1})$
from below by
\begin{align}
{}\,&
\Big(
\frac{1}{4}\cdot |M_{2k-1}\cdot
 \mathcal{X}_{2k-1}^{M_{2k-1}-1}\cdot
 \hat{G}_{2, 2k-2}\cdot \mathcal{X}_{2k-1}'|^2
 -
 |\hat{G}_{2, 2k-2}'\cdot
(1+ \mathcal{X}_{2k-1}^{M_{2k-1}})|^2
\Big)
\cdot
\frac{\sqrt{-1}}{2}\text{d}z\wedge
\text{d}\bar{z}
\nonumber
\\
\geqslant\,&\,
\Big(
O^+(1)\cdot
 M_{2k-1}^2
\cdot
 |\mathcal{X}_{2k-1}|^{2\cdot(M_{2k-1}-1)}
 -
O^+(1)
  \cdot
  \big(
  1+
|\mathcal{X}_{2k-1}|^{ M_{2k-1}}
\big)^2
\Big)
\cdot
\frac{\sqrt{-1}}{2}\text{d}z\wedge
\text{d}\bar{z}
\nonumber
\\
\text{[$\spadesuit$]}
\quad
\geqslant\,&\,
\Big(
O^+(1)\cdot
 M_{2k-1}^2
\cdot
 |\mathcal{X}_{2k-1}|^{2\cdot(M_{2k-1}-1)}
 -
  O^+(1)
  \cdot
 4
|\mathcal{X}_{2k-1}|^{ 2M_{2k-1}}
\Big)
\cdot
\frac{\sqrt{-1}}{2}\text{d}z\wedge
\text{d}\bar{z}
\nonumber
\\
=\,&\,
\big(
O^{+}(1)
\cdot M_{2k-1}^2
 -
 O^+(1)\cdot 4|\mathcal{X}_{2k-1}|^2
  \big)
\cdot
|\mathcal{X}_{2k-1}|^{2\cdot(M_{2k-1}-1)}
\cdot
\frac{\sqrt{-1}}{2}\text{d}z\wedge
\text{d}\bar{z}
\nonumber
\\
\label{estimate omega from below}
\text{[see~\eqref{the tricky small disck}]}
\quad
\geqslant\,&\,
\big(
O^{+}(1)
\cdot M_{2k-1}^2
 -
 O^+(1)
  \big)
\cdot
({m}_{2k-1}^{2/3})^{ 2\cdot(M_{2k-1}-1)}
\cdot
\frac{\sqrt{-1}}{2}\text{d}z\wedge
\text{d}\bar{z},
\end{align}
where we always abuse the notation
$O^+(1)$ 
for different  positive bounded constants independent of $M_{2k-1}$ (but can depend on $m_{2k-1}$, $\widetilde{m}_{2k-1}$,    $\mathcal{X}_{2k-1}$, 
$\hat{G}_{2, 2k-2}$, $\hat{G}_{1, 2k-2}$,
$R_{2k-1}$,
$c_{2k-1}$,
$r_{2k-1}$),
and where $[\spadesuit]$
uses the fact that $|\mathcal{X}_{2k-1}|>1$
on $\overline{\mathbb{D}}(c_{2k-1}, r_{2k-1})$ due to~\eqref{the tricky small disck}. 
Thus we can bound the area growth
 from below,  without using Jensen's formula, by
\begin{align}
\nonumber
T_{\overline{F}_{2k-1}}(\omega)
&
=
\int_{0}^{R_{2k-1}}\,\frac{\text{d}t}{t}\int_{\mathbb{D}_t}\,
f^*\omega
\\
\nonumber
&
\geqslant
\int_{|c_{2k-1}|+r_{2k-1}}^{R_{2k-1}}\,\frac{\text{d}t}{t}\int_{\overline{\mathbb{D}}(c_{2k-1}, r_{2k-1})}\,
f^*\omega
\quad\text{[since $\overline{\mathbb{D}}(c_{2k-1}, r_{2k-1})
\subset \overline{\mathbb{D}}_t$ for $t\geqslant |c_{2k-1}|+r_{2k-1}$]}
\\
&
\label{estimate T}
\geqslant
\big(
O^{+}(1)
\cdot M_{2k-1}^2
 -
 O^+(1)
  \big)
\cdot
({m}_{2k-1}^{2/3})^{ 2(M_{2k-1}-1)}
\cdot
O^+(1)
\qquad\text{[see~\eqref{estimate omega from below}]}.
\end{align}
We emphasize a key point here that the exponent of $m_{2k-1}>1$ is
\[
\frac{2}{3}\cdot 2(M_{2k-1}-1)\gg M_{2k-1}
\]
for $M_{2k-1}\gg 1$.

Next, on  $\overline{\mathbb{D}}_{R_{2k-1}}$
we have
$
|(\hat{G}_{2, 2k-2}\cdot H_{2k-1})'|
\leqslant
 O^+(1)
\cdot
 M_{2k-1}\cdot
 m_{2k-1}^{M_{2k-1}}$
 by~\eqref{compute derivative}.
Thus
\begin{equation}
\label{estimate omega from above}
    \overline{F}_{2k-1}^*\,\omega \leqslant
\big(
O^+(1)
\cdot
 M_{2k-1}^2\cdot
 m_{2k-1}^{2\cdot M_{2k-1}}
 +
 O^+(1)
 \big)
 \cdot
\frac{\sqrt{-1}}{2}\text{d}z\wedge
\text{d}\bar{z}
\end{equation}
on $\overline{\mathbb{D}}_{R_{2k-1}}$. 
 Therefore we can estimate
 the length growth from above by
 \begin{align}
\nonumber
 L_{\overline{F}_{2k-1}}(\omega)\,
 &
 \leqslant\,
 \big(
O^+(1)
\cdot
 M_{2k-1}^2\cdot
 m_{2k-1}^{2\cdot M_{2k-1}}
 +
 O^+(1)
 \big)^{1/2}
 \cdot
 \int_0^{R_{2k-1}}\,
\text{Length}_{\mathbb{C}}\,(\partial \mathbb{D}_t)\,
\frac{\text{d}t}{t}
\\
\nonumber
 &
 =\,
 O^+(1)
 \cdot
 \big(
O^+(1)
\cdot
 M_{2k-1}^2\cdot
 m_{2k-1}^{2\cdot M_{2k-1}}
 +
 O^+(1)
 \big)^{1/2}
 \cdot 
 2\pi\,R_{2k-1}
 \\
 &
 \label{estimate L}
 \leqslant\,
 O^+(1)
\cdot
 \big(
O^+(1)
\cdot
 M_{2k-1}\cdot
 m_{2k-1}^{M_{2k-1}}
 +
 O^+(1)
 \big).
\end{align}

Hence the desired estimate~\eqref{key estimate}
 follows directly
by comparing the asymptotic growth rates of~\eqref{estimate L} and~\eqref{estimate T}  with respect to the parameter $M_{2k-1}\gg 1$.
\end{proof}

\medskip
\noindent
{\bf Length-Area  Estimate (II).} 
Fix $k\in \mathbb{Z}_+$.
For the family of
holomorphic discs 
$$\overline{F}_{2k-1}:=\pi\circ {F}_{2k-1} : {\mathbb{D}_{R_{2k-1}}}\longrightarrow X$$
parameterized by 
$M_{2k-1}\in \mathbb{Z}_+$,
one has
\begin{equation}
    \label{length-area estimate 2}
    \dfrac{\text{Length}_{\omega}\big(\overline{F}_{2k-1}(\partial \mathbb{D}_{R_{2k-1}})\big)}{\text{Area}_{\omega}\big(\overline{F}_{2k-1}( \mathbb{D}_{R_{2k-1}})\big)}
	\rightarrow
	0
	\qquad
	\text{as $M_{2k-1}$ tends to infinity}.
\end{equation}

\begin{proof}
Following the preceding argument,
by~\eqref{estimate omega from above},
we receive
\[
\text{Length}_{\omega}\big(\overline{F}_{2k-1}(\partial \mathbb{D}_{R_{2k-1}})\big)
\leqslant
\big(
O^+(1)
\cdot
 M_{2k-1}^2\cdot
 m_{2k-1}^{2\cdot M_{2k-1}}
 +
 O^+(1)
 \big)^{1/2}
 \cdot
 2\pi\, R_{2k-1}.
\]
By~\eqref{estimate omega from below}, we have
\begin{equation}
    \label{bound holo disc are on large R}
\text{Area}_{\omega}\big(\overline{F}_{2k-1}( \mathbb{D}_{R_{2k-1}})\big)
\geqslant
\big(
O^{+}(1)
\cdot M_{2k-1}^2
 -
 O^+(1)
  \big)
\cdot
({m}_{2k-1}^{2/3})^{ 2\cdot(M_{2k-1}-1)}
\cdot
\pi\,r_{2k-1}^2.
\end{equation}

Thus the desired estimate 
 follows directly
by comparing the asymptotic growth rates in the above two estimates with respect to the parameter $M_{2k-1}\nearrow \infty$.
\end{proof}

Now we move on to {\em Step $2k$}. Copy
\[
(G_{1, 2k-1}, G_{2,  2k-1}):=
F_{2k-1}
=
(\hat{G}_{1, 2k-2}, \hat{G}_{2, 2k-2}\cdot H_{2k-1})
\]
defined on a neighborhood $V_{2k-1}$ of $\overline{\mathbb{D}}_{R_{2k-1}}$. 
 By Runge's approximation theorem, for each $i=1, 2$, we can approximate
$G_{i, 2k-1}$ on a neighborhood $V_{2k-1}'\subset\subset V_{2k-1}$  of $\overline{\mathbb{D}}_{R_{2k-1}}$
by nonconstant entire function
$\hat{G}_{i, 2k-1}$ within small error $\epsilon_{2k-1}/2023$,
such that 
$\hat{F}_{2k-1}:=(\hat{G}_{1, 2k-1}, \hat{G}_{2, 2k-1})$
is very close to $F_{2k-1}$
\[
||\hat{F}_{2k-1}(z)-F_{2k-1}(z)||_{\mathbb{C}^2}\leqslant 2\epsilon_{2k-1}/2023
\qquad
{\scriptstyle
	(\forall \,z\ \in\ V_{2k-1}')}.
\]

Similarly,
we can find some $z_{2k}\in \mathbb{C}$
with $|z_{2k}|=:R_{2k}>R_{2k-1}+1$ such that
$\pi_2\circ\hat{G}_{2, 2k-1}(z_{2k})
=e_{2, k}$.

\medskip\noindent
{\bf Plan of Step $2k$}.
To modify
$\hat{F}_{2k-1}$ as
${F}_{2k}
:=(\hat{G}_{1, 2k-1}\cdot H_{2k}, \hat{G}_{2, 2k-1})$
by some holomorphic
function $H_{2k}$ defined in a neighborhood $V_{2k}$ of $\overline{\mathbb{D}}_{R_{2k}}$,
such that
${F}_{2k}$ is very close to $\hat{F}_{2k-1}$ and ${F}_{2k-1}$ on $V_{2k-1}'$
\begin{equation}
\label{small error 2023, (2)}
||{F}_{2k}(z)-\hat{F}_{2k-1}(z)||_{\mathbb{C}^2}
\leqslant \frac{\epsilon_{2k-1}}{2023},\quad
||{F}_{2k}(z)-{F}_{2k-1}(z)||_{\mathbb{C}^2}
\leqslant \frac{3\,\epsilon_{2k-1}}{2023}
\qquad
{\scriptstyle
	(\forall \,z\ \in\ V_{2k-1}')},
\end{equation}
whereas
the induced currents $\Phi_{\pi\circ {F}_{2k}\restriction_{\mathbb{D}_{R_{2k}}}}$ 
and 
$\frac{[\pi\circ {F}_{2k}(\mathbb{D}_{R_{2k}})]}{\text{Area}(\pi\circ {F}_{2k}(\mathbb{D}_{R_{2k}}))}$
 both
have mass concentrated near $E_1\times \{e_{2, k}\}$ in a quantitative sense to be specified later.

\smallskip
The method of {\em Step $2k-1$}
works as well in {\em Step $2k$}. 
Fix a neighborhood $U_{2k}$ of $z_{2k}$ such that
\[
\pi\circ\hat{G}_{2, 2k-1}(U_{2k})\,
\subset\,
\mathbb{D}(e_{2, k}, 2^{-2k-2023}):=
\pi_2
\big(
\mathbb{D}(\hat{G}_{2, 2k-1}(z_{2k}), 2^{-2k-2023})
\big). 
\]
Choose some
holomorphic function
$\mathcal{X}_{2k}$ defined on a neighborhood $V_{2k}$ of 
$\overline{\mathbb{D}}_{R_{2k}}$
such that
$|\mathcal{X}_{2k}|<1$ on $\overline{\mathbb{D}}_{R_{2k}}\setminus U_{2k}$ whereas
$|\mathcal{X}_{2k}(z_{2k})|>1$. Take 
$H_{2k}:=1+\mathcal{X}_{2k}^{M_{2k}}$ for some very large integer $M_{2k}\gg 1$
to be specialized later. 
Thus
$H_{2k}$ is almost $1$
on $\overline{\mathbb{D}}_{R_{2k}}\setminus U_{2k}$ whereas
$H_{2k}$ oscillates violently
inside $U_{2k}$ so that the goal of {\em Step} $2k$ can be reached.

\medskip\noindent
{\bf Technical Details in the Proof of Theorem~B.} 
Subsequently, in each Step $\ell\geqslant 1$, 
we find out the aforementioned
$z_\ell$, $R_{\ell}=|z_\ell|>R_{\ell-1}+1$,
$U_\ell$ and an  auxiliary holomorphic function
 $\mathcal{X}_{\ell}$ defined on
$V_\ell\supset \overline{\mathbb{D}}_{R_\ell}$. 
Now we  choose some sufficiently large $M_{\ell}\gg 1$ and determine a very small error bound $0<\epsilon_\ell\ll 1$  such that all the following sophisticated considerations hold true simultaneously.
\begin{itemize}
    \item[($\heartsuit$).] The holomorphic disc 
$\overline{F}_{\ell}=\pi\circ {F}_{\ell}\, :\, {\mathbb{D}_{R_{\ell}}}\longrightarrow X$ satisfies
length-area estimates
\begin{equation}
    \label{key estimate 3}
	\dfrac{L_{\overline{F}_{\ell}}(\omega)}{T_{\overline{F}_{\ell}}(\omega)}
	\leqslant
	2^{-\ell},
	\qquad
	\dfrac{\text{Length}_{\omega}\big(\overline{F}_{\ell}(\partial \mathbb{D}_{R_{\ell}})\big)}{\text{Area}_{\omega}\big(\overline{F}_{\ell}( \mathbb{D}_{R_{\ell}})\big)}
	\leqslant
	2^{-\ell}.
\end{equation}

	\smallskip
	\item[($\diamondsuit$).] 
	Most area of
	the holomorphic disc
    $\overline{F}_{\ell}$
    is, either concentrated near the curve
     $\{e_{1, k}\}\times E_2$ when $\ell=2k-1$ is odd, or  concentrated near
     the curve $E_1\times \{e_{2, k}\}$ when $\ell=2k$ is even.
     Precisely,
     \begin{equation}
     \label{area concentration}
     \dfrac{T_{\overline{F}_{\ell}}
     (\mathbf{1}_{X\setminus  \hat{U}_{\ell}}\cdot \omega)}{T_{\overline{F}_{\ell}}
     (\omega)}
    <
     2^{-\ell},
     \qquad
     	\frac{[\overline{F}_{\ell}(\mathbb{D}_{R_\ell})]}{\text{Area}_{\omega}\big(\overline{F}_{\ell}( \mathbb{D}_{R_\ell})\big)}
     	(\mathbf{1}_{X\setminus  \hat{U}_{\ell}}\cdot \omega)
     	=
     	\frac{\int_{\mathbb{D}_{R_\ell}\setminus \overline{F}_{\ell}^{-1}(\hat{U}_\ell) }\,\overline{F}_{\ell}^*\,\omega}{\int_{\mathbb{D}_{R_\ell}}\,\overline{F}_{\ell}^*\,\omega}
     	<
     2^{-\ell},
     \end{equation}
    where
    $\hat{U}_{\ell}$ is either
    $\pi_1 \big(\mathbb{D}({z_{\ell}}, 2^{-\ell})\big)\times E_2$ (when $\ell=2k-1$)
    or
    $\hat{U}_{\ell}=
    E_1\times
    \pi_2 \big(\mathbb{D}({z_{\ell}}, 2^{-\ell})\big)$ (when $\ell=2k$), and where
    $\mathbf{1}_{X\setminus  \hat{U}_{\ell}}$
    is the characteristic function
    of the set $X\setminus  \hat{U}_{\ell}$.

  	\smallskip
	\item[($\clubsuit$).] 
	By our construction,  $F_{\ell}$ is well-defined on all three domains
	$\mathbb{D}_{R_\ell}\subset\subset V_{\ell}'\subset\subset V_\ell$. We choose $0<\epsilon_\ell\ll 1$   
	shrinking to zero very fast (set $\epsilon_0:=1$) such that 
	\begin{equation}
	\label{epsilons shrinking fast}
	\epsilon_{\ell}<\epsilon_{\ell-1}/2
	\end{equation}
	and such that
	for any small holomorphic perturbation $F_{\ell, \mathsf{p}}$  of $F_{\ell}$ within difference $\epsilon_\ell$ on $V_\ell'$ (i.e.,
	$|F_{\ell, \mathsf{p}}-F_{\ell}|<\epsilon_\ell$ on $V_\ell'$),
	the estimates~\eqref{key estimate 3} and~\eqref{area concentration}
	are stable in the sense:
	\begin{equation}
	    \label{stability condition}
	    \dfrac{L_{\overline{F}_{\ell, \mathsf{p}}}(\omega)}{T_{\overline{F}_{\ell, \mathsf{p}}}(\omega)}
	\leqslant
	2^{-\ell+1},
	\quad
	\dfrac{T_{\overline{F}_{\ell, \mathsf{p}}}
     (\mathbf{1}_{X\setminus  2\hat{U}_{\ell}}\cdot \omega)}{T_{\overline{F}_{\ell, \mathsf{p}}}
     (\omega)}
    <
     2^{-\ell+1},
	\end{equation}
	\begin{equation}
	    \label{stability condition 2} 
	   	\dfrac{\text{Length}_{\omega}\big(\overline{F}_{\ell, \mathsf{p}}(\partial \mathbb{D}_{R_{\ell}})\big)}{\text{Area}_{\omega}\big(\overline{F}_{\ell, \mathsf{p}}( \mathbb{D}_{R_{\ell}})\big)}
	\leqslant
	2^{-\ell+1},
	\qquad
		\frac{\int_{\mathbb{D}_{R_\ell}\setminus \overline{F}_{\ell, \mathsf{p}}^{-1}(2\hat{U}_\ell) }\,\overline{F}_{\ell, \mathsf{p}}^*\,\omega}{\int_{\mathbb{D}_{R_\ell}}\,\overline{F}_{\ell, \mathsf{p}}^*\,\omega}
     	<
     2^{-\ell+1},
	\end{equation}
	where 
$\overline{F}_{\ell, \mathsf{p}}:  {\mathbb{D}_{R_{\ell}}} \rightarrow X$ is the restriction of $\pi\circ {F}_{\ell, \mathsf{p}}$ on $ {\mathbb{D}_{R_{\ell}}}$,
and where $2\hat{U}_{\ell}\supset \hat{U}_{\ell}$ 
 is either
    $\pi_1 \big(\mathbb{D}({z_{\ell}}, 2^{-\ell+1})\big)\times E_2$ (when $\ell=2k-1$)
    or
    $E_1\times
    \pi_2 \big(\mathbb{D}({z_{\ell}}, 2^{-\ell+1})\big)$ (when $\ell=2k$).
\end{itemize}

\medskip

 The requirement~($\heartsuit$) is guaranteed by the key estimates~\eqref{key estimate}, \eqref{length-area estimate 2} and by specializing a parameter $M_\ell\gg 1$ sufficiently large.
 
 The demand~($\diamondsuit$)
 follows from~\eqref{the tricky small disck},~\eqref{estimate T} and~\eqref{bound holo disc are on large R} by specializing  $M_\ell\gg 1$,
 since both 
$ {T_{\overline{F}_{\ell}}
     (\mathbf{1}_{X\setminus \hat{U}_{\ell}}\cdot \omega)}
     $ and $\int_{\mathbb{D}_{R_\ell}\setminus \overline{F}_{\ell}^{-1}(\hat{U}_\ell) }\,\overline{F}_{\ell}^*\,\omega$
are  uniformly bounded from above by some constant $O^+(1)$ independent of the parameter $M_\ell\in \mathbb{Z}_+$.  
Indeed, by our construction, for every $M_\ell\geqslant 1$, we  have
$U_\ell\subset\subset \overline{F}_{\ell}^{-1}(\hat{U}_\ell)$, while
 the sequence of holomorphic maps
$\overline{F}_{\ell}$,
parameterized by $M_{\ell}\in \mathbb{Z}_+$,
converges uniformly to
$\pi\circ\hat{F}_{\ell-1}$
on
$\overline{\mathbb{D}}_{R_\ell}\setminus U_\ell$
because 
$|\mathcal{X}_{\ell}|<1$ on $\overline{\mathbb{D}}_{R_\ell}\setminus U_\ell$.

The existence of $\epsilon_\ell$ in the
  consideration
($\clubsuit$)
can be proved by a {\em reductio ad absurdum} argument,  since the $\mathcal{C}^0$--convergence of a sequence of holomorphic functions
on a larger open set $V_{\ell}'\supset \overline{\mathbb{D}}_{R_\ell} $
can guarantee the $\mathcal{C}^1$--convergence on the smaller compact set $\overline{\mathbb{D}}_{R_\ell}$ by complex analysis.

The estimates~\eqref{small error 2023} and~\eqref{small error 2023, (2)} guarantee that the holomorphic discs $\{F_\ell\}_{\ell\geqslant 1}$  in~\eqref{holo discs are convergent} can
converge to an entire curve
$F=(G_1, G_2): \mathbb{C}\rightarrow \mathbb{C}\times \mathbb{C}$.

For every $\ell\geqslant 1$,
 on $V_\ell'$  we have
\begin{align*}
||F-F_\ell||
&
\leqslant
\sum_{j=\ell}^{\infty}\,
||F_{j+1}-F_j||
\\
\text{[use~\eqref{small error 2023},~\eqref{small error 2023, (2)}]}
\qquad
&
\leqslant
\sum_{j=\ell}^{\infty}\,
3\cdot\epsilon_{j}/2023
\\
\text{[check~\eqref{epsilons shrinking fast}]}
\qquad
&
\leqslant
\sum_{j=\ell}^{\infty}\,
3/2023\cdot
\Big(\frac{1}{2}
\Big)^{j-\ell}\cdot\epsilon_{\ell}
\\
&
<
\epsilon_{\ell}
.
\end{align*}

Thus~\eqref{stability condition} guarantees that
$g=\pi\circ F$ produces a
holomorphic disc
$g_\ell:=g\restriction_{{\mathbb{D}}_{R_\ell} }$
satisfying
\begin{equation}
\label{last estimate}
 \dfrac{L_{g_\ell}(\omega)}{T_{g_\ell}(\omega)}
	\leqslant
	2^{-\ell+1},
	\quad
	\dfrac{T_{g_\ell}
     (\mathbf{1}_{X\setminus  2\hat{U}_{\ell}}\cdot \omega)}{T_{g_\ell}
     (\omega)}
     <
     2^{-\ell+1}.
\end{equation}
Thus the sequence of concentric holomorphic discs $\{g\restriction_{{\mathbb{D}}_{R_\ell} }\}_{\ell\geqslant 1}$ satisfy the length-area conditions~\eqref{length-area condition}, \eqref{length-area-condition-Ahlfors} for producing Nevanlinna/Ahlfors currents.

For any $e_1\in E_1$,
since $\{e_{1, \ell}\}_{\ell\geqslant 1}$
is dense in $E_1$, we can 
find some index
 subsequence 
$\{j_{\ell}\}_{\ell\geqslant 1}\nearrow \infty$
 such that the corresponding
point sequence
$\{e_{1, j_\ell}\}_{\ell\geqslant 1}$
converges to $e_1$.
Note that the sequence of holomorphic discs 
$\{g\restriction{\mathbb{D}_{ R_{2j_\ell-1}}\}_{\ell\geqslant 1}}$ satisfies the the length-area condition~\eqref{length-area condition} by  the first inequality of~\eqref{last estimate}. After passing to some subsequence, we thus obtain some positive closed Nevanlinna current $\mathsf{T}$.
By the second inequality of~\eqref{last estimate},
noting that $2\hat{U}_{2j_\ell-1}$
converges to the curve $e_1\times E_2$ as $\ell\nearrow \infty$,
 $\mathsf{T}$ must have zero mass outside $e_1\times E_2$, i.e., 
 $\mathsf{T}$ is supported on $e_1\times E_2$. By Siu's decomposition theorem,
 $\mathsf{T}$ is proportional to
 $[\{e_1\}\times E_2]$.
 
 Similarly, for every $e_2\in E_2$, we can also select some sequence of increasing radii
 to generate a Nevanlinna current proportional to 
 $[E_1\times \{e_2\}]$.
 
Likewise, we can prove the analogous result about Ahlfors currents by  using~\eqref{stability condition 2} instead of~\eqref{stability condition}.
\qed

\medskip\noindent
{\bf Proof of the Key Lemma.} 
Fix a small disc $\overline{\mathbb{D}}(z_0, 2\delta)$ in $U$. 
Draw a simple closed curve  passing through $z_0$
which bounds a simply connected  domain $\mathsf{D}$ 
 strictly larger than $\overline{\mathbb{D}}_R$ except at $z_0$.
Using the Riemann mapping theorem,
we receive a biholomorphic map
$\psi$ from $\mathsf{D}$ to the unit disc $\mathbb{D}_1$.
By compactness,
the maximum modulus $\mathsf{m}$ of
$|\psi|$ on $\overline{\mathbb{D}}_R\setminus \mathbb{D}(z_0, \delta)$ is strictly less than $1$.
Now we claim that 
\[
\mathcal{X}(
\bullet):=(1+\delta_1)
\cdot \psi\big((1-\delta_2)\cdot \bullet\big)
\]
satisfies our requirement for some  small $\delta_1, \delta_2>0$.

Indeed, we first require that
$(1+\delta_1)\cdot \mathsf{m}<1$.
Next, we choose $\delta_2>0$ sufficiently small such that
\[
\frac{1}{1-\delta_2}\cdot\mathbb{D}(z_0, \delta):=\mathbb{D}
\Big(
\frac{z_0}{1-\delta_2}, \frac{\delta}{1-\delta_2}
\Big)\,
\subset\,
\mathbb{D}(z_0, 2\delta)
\qquad
\Big(\text{equivalently,}\,
\delta_2
<
\frac{\delta}{|z_0|+2\delta}
\Big),
\]
and that
$|\mathcal{X}(
z_0)|=(1+\delta_1)
\cdot |\psi\big((1-\delta_2)\cdot z_0\big)|>1$.
Then $\mathcal{X}$ is well defined on $$\frac{1}{1-\delta_2}\cdot \mathsf{D}:=
\Big\{
\frac{z}{1-\delta_2}\,:\,
z\in \mathsf{D}
\Big\}
\,\,\supset\,\, \overline{\mathbb{D}}_R$$ and satisfies our requirements.
\qed

\bigskip

\setlength\parindent{0em}

\medskip

{\scriptsize

\smallskip
\noindent
S.-Y. Xie

\smallskip
\noindent
Academy of Mathematics and System Science \& Hua Loo-Keng Key Laboratory
		of Mathematics, Chinese Academy of Sciences, Beijing 100190, China; 
		School of Mathematical Sciences, University of Chinese Academy of Sciences, Beijing
100049, China

}

\smallskip
\noindent
\raggedleft{\scriptsize  xiesongyan@amss.ac.cn}


\begin{thebibliography}{XL}{\scriptsize



 
{\bf\bibitem{Alarcon-Forstneric}
{\rm Alarc\'on}}, A.; {\rm Forstneri\v{c}}, F.: 
{\em Oka-1 manifolds.} 
arXiv:2303.15855, 2023.


\medskip

{\bf\bibitem{Berczi-Kirwan}
{\rm Bérczi}}, G.; {\rm Kirwan}, F.: 
{\em Non-reductive geometric invariant theory and hyperbolicity.} 
arXiv:1909.11417, 2019.


\medskip



{\bf\bibitem{Birkhoff}
{\rm Birkhoff}}, G. D.: 
{\em D\'emonstration d’un th\'eor\`eme \'el\'ementaire sur les fonctions enti\`eres.}  
C.R. Acad.
Sci. Paris, 189:473--475, 1929.


\medskip

{\bf\bibitem{Bloch-1926}
{\rm Bloch}}, A.:
{\em Sur les syst\`emes de fonctions uniformes satisfaisant \`a l’\'equation d’une vari\'t\'e alg\'ebrique dont l’irr\'egularit\'e d\'epasse la dimension.}  J. de
Math., 5, 19--66, 1926.


\medskip


{\bf\bibitem{Brody-1978}
{\rm Brody}}, R.:
{\em Compact manifolds and hyperbolicity.}
 Trans. Amer. Math. Soc. 235, 213--219,  1978.
 

\medskip


{\bf\bibitem{Brotbek-2016}
{\rm Brotbek}}, D.:
{\em Explicit symmetric differential forms on complete intersection varieties and applications.}  Math. Ann. 366(1), 417--446, 2016.

\medskip

{\bf\bibitem{Brotbek-2017}
{\rm Brotbek}}, D.:
{\em On the hyperbolicity of general hypersurfaces.}  Publ. Math. Inst. Hautes Études Sci. 126, 1--34,
 2017.

\medskip




{\bf\bibitem{Brotbek-Lionel}
{\rm Brotbek}}, D.; {\rm Darondeau}, L.:
{\em Complete intersection varieties with ample cotangent bundles.} (arXiv:1511.04709) Invent. Math. 212, 913--940, 2018.

\medskip


{\bf\bibitem{Brunella-1999}
{\rm Brunella}}, M.:
{\em Courbes entieres et feuilletages holomorphes.}
L’Enseignement Mathematique, 45:195--216, 1999.

\medskip

{\bf\bibitem{CHX-2023}
	{\rm Chen}}, Z.; {\rm Huynh}, D. T.; {\rm Xie}, S.-Y.:
{\em Universal Entire Curves in Projective Spaces with Slow Growth.}  Journal of Geometric Analysis,  33, 308, 2023.

\medskip


{\bf\bibitem{Debarre-2005}
	{\rm Debarre}}, O.: 
{\em Varieties with ample cotangent bundle.}   Compos. Math. 141(6), 1445--1459, 2005.

{\bf\bibitem{Demailly-2011}
	{\rm Demailly}}, J.-P.:
{\em Holomorphic Morse inequalities and the Green-Griffiths-Lang conjecture.}  Pure Appl. Math. Q. 7, 1165--1207; Special Issue: In memory of Eckart Vihweg,
2011.

\medskip

{\bf\bibitem{DMR}
{\rm Diverio}}, S.; {\rm Merker}, J.; {\rm Rousseau}, E.;
{\em Effective algebraic degeneracy.} 
Invent. Math.,
180, 161--223, 2010.

\medskip

{\bf\bibitem{Dinh-Sibony-2018}
{\rm Dinh}}, T-C.; {\rm Sibony}, N.;
{\em Unique ergodicity for foliations in $\mathbb{P}^2$ with an invariant curve.} Invent. Math.,
211(1):1--38, 2018.

\medskip

  

{\bf\bibitem{Dinh-Sibony-2020}
{\rm Dinh}}, T-C.; {\rm Sibony}, N.;
{\em Some open problems on holomorphic foliation theory.}
Acta Math. Vietnam., 45(1):103--112, 2020.


\medskip

{\bf\bibitem{Dujardin-ICM}
{\rm Dujardin}}, R.:
{\em Geometric methods in holomorphic dynamics.} 
Proceedings of the ICM 2022.
 


\medskip


{\bf\bibitem{Duval-degree6}
{\rm Duval}}, J.:
{\em Une sextique hyperbolique dans $\mathbb{P}^3(\mathbb{C})$.}
  Math. Ann. 330.3, 473--476, 2004. 
 

\medskip


{\bf\bibitem{Duval-2006}
{\rm Duval}}, J.:
{\em Singularités des courants d'Ahlfors.}  Ann. Sci. Éc. Norm. Supér. (4) 39 (3), 527--533, 2006


\medskip


{\bf\bibitem{Duval-2008}
{\rm Duval}}, J.:
{\em Sur le lemme de Brody.}  Invent. Math., 173(2):305--314, 2008.


\medskip

{\bf\bibitem{Duval-2017}
{\rm Duval}}, J.:
{\em Around brody lemma.} arXiv:1703.01850, 2017.

\medskip


{\bf\bibitem{DH-2018}
{\rm Duval}}, J.; {\rm Huynh}, D. T.:
{\em A geometric second main theorem.}  Math. Ann., 370(3-4):1799–1804, 2018.



\medskip


{\bf\bibitem{Oka-book}
	{\rm Forstneri\v{c}}}, Franc:
{\em Stein manifolds and holomorphic mappings.}
Springer, Cham; Ergebnisse der Mathematik und ihrer Grenzgebiete Vol. 56 (2nd edition). xiv+562 pp. 2017.

\medskip





{\bf\bibitem{Gaier}
	{\rm Gaier}}, D.:
{\em Lectures on Complex Approximation.}   Birkhäuser, Boston, 1987.

\medskip

{\bf\bibitem{Green-Griffiths}
	{\rm Green}}, M.; {\rm Griffiths}, P.:
{\em Two applications of algebraic geometry to entire holomorphic mappings.}  In The Chern
Symposium 1979 (Proc. Internat. Sympos., Berkeley, Calif., 1979), pages 41--74. Springer, New York-Berlin, 1980.

\medskip


{\bf\bibitem{Guo-Xie-1}
	{\rm Guo}}, B.; {\rm Xie}, S.-Y.:
{\em Universal holomorphic maps with slow growth I. An Algorithm.}
arXiv:2306.11193 (to appear in Math. Ann.), 2023. 

\medskip





{\bf\bibitem{Hormander}
	{\rm H\"ormander}}, L.:
{\em The fully nonlinear Cauchy problem with small data.}  Bol. Soc. Brasil. Mat. (N.S.),
20(1):1--27, 1989.




\medskip


{\bf\bibitem{Tuan-IMRN}
	{\rm Huynh}}, D. T.:
{\em Examples of Hyperbolic Hypersurfaces of Low Degree in Projective Spaces.} 
International Mathematics Research Notices, Volume 2016, Issue 18, pp. 5518--5558, 2016.


\medskip




{\bf\bibitem{Huynh-PhD}
	{\rm Huynh}}, D. T.:
{\em Sur le Second Théorème Principal.}  Ph.D. Thesis, Orsay, 2016.

\medskip


{\bf\bibitem{HV-2021}
	{\rm Huynh}}, D. T.; {\rm Vu}, D.-V.:
{\em On the set of divisors with zero geometric defect.} Journal für die reine und angewandte Mathematik, vol. 2021, no. 771,  pp. 193--213, 2021.

\medskip



{\bf\bibitem{HX-2021}
	{\rm Huynh}}, D. T.;  {\rm Xie}, S.-Y.:
{\em On Ahlfors currents.} 
Journal de Mathématiques Pures et Appliquées,
Volume 156,
Pages 307--327,
2021.

\medskip

{\bf\bibitem{Kleiner}
{\rm Kleiner}}, B.:
{\em Hyperbolicity using minimal surfaces.}  Preprint.

\medskip

{\bf\bibitem{Kobayashi-1998}
{\rm Kobayashi}}, S.:
{\em Hyperbolic Complex Spaces.}  Grundlehren der Mathematischen Wissenschaften, Vol. 318, Springer-Verlag, Berlin, 1998. 

\medskip

{\bf\bibitem{Kusakabe-2017}
{\rm Kusakabe}}, Y.:
{\em Dense holomorphic curves in spaces of holomorphic maps and applications to universal
maps.}
  Internat. J. Math., 28(4):1750028, 15, 2017.



\medskip

{\bf\bibitem{Lu-Yau}
{\rm Lu}}, S.; {\rm Yau}, S.-T.:
{\em Holomorphic curves in surfaces of general type.}   Proc.
Nat. Acad. Sci. USA, 87, 80--82, 1990.

 
\medskip

{\bf\bibitem{McQuillan-1998}
{\rm McQuillan}}, M.:
{\em Diophantine approximations and foliations.}  Inst. Hautes Etudes Sci. Publ. Math., (87):121--174,
1998.


\medskip

{\bf\bibitem{McQuillan-ICM}
{\rm McQuillan}}, M.:
{\em Integrating $\partial \bar{\partial}$.}  In Proceedings of the International Congress of Mathematicians, Vol. I (Beijing, 2002), Higher Ed. Press, pp. 547--554, 2002.


\medskip

{\bf\bibitem{Merker-2009}
{\rm Merker}}, J.:
{\em Low pole order frames on vertical jets of the universal hypersurface.} Annales de l’Institut Fourier 59(3), 1077--1104, 2009.


\medskip

{\bf\bibitem{Merker-2015}
{\rm Merker}}, J.:
{\em Algebraic differential equations for entire holomorphic curves in projective hyper-surfaces of general type: optimal lower degree bound.} (arXiv:1005.0405) In Geometry and analysis on manifolds, Progress in Mathematics, vol. 308, 41--142, Birkhäuser/Springer, Cham, 2015.



\medskip

{\bf\bibitem{Riedl-Yang}
{\rm Riedl}}, E.; {\rm Yang}, D.: 
{\em Applications of a Grassmannian technique to hyperbolicity, Chow equivalency, and Seshadri constants.} (arXiv:1806.02364) 
  Journal of Algebraic Geometry 31, 1--12,  2022.

\medskip

{\bf\bibitem{Ru-2021}
{\rm Ru}}, M.:
{\em Nevanlinna theory and its relation to Diophantine approximation.}
Second edition. World Scientific Publishing Co. Pte. Ltd., Hackensack, NJ, xvi+426 pp, 2021.



\medskip

{\bf\bibitem{Sibony-email}
{\rm Sibony}}, N.:
 Private communication, January 2021.

\medskip


{\bf\bibitem{Siu-1974}
{\rm Siu}}, Y.-T.:
{\em Analyticity of sets associated to Lelong numbers and the extension of closed positive currents.} Invent.
Math., 27:53--156, 1974.

\medskip




{\bf\bibitem{Siu-1995}
{\rm Siu}}, Y.-T.:
{\em Hyperbolicity problems in function theory.} In: Chan, K.-Y., Liu, M.-C. (eds.) Five Decades as a Mathematician and Educator---On the 80th Birthday of Professor Yung-Chow Wong, pp. 409--514. World Scientific, Singapore, 1995.

\medskip

{\bf\bibitem{Siu-ICM2002}
{\rm Siu}}, Y.-T.:
{\em Some Recent Transcendental Techniques in Algebraic and Complex
Geometry.} In Proceedings of the International Congress of Mathematicians, Beijing, China,
August 20-28, 2002, Volume I: 439--448.

\medskip

{\bf\bibitem{Siu-Abel}
{\rm Siu}}, Y.-T.:
{\em Hyperbolicity in Complex Geometry.}   In: Laudal, O.A., Piene, R. (eds) The Legacy of Niels Henrik Abel. Springer, Berlin, Heidelberg, 2004.


\medskip

{\bf\bibitem{Siu-2015}
{\rm Siu}}, Y.-T.:
{\em Hyperbolicity of generic high-degree hypersurfaces in complex projective spaces.}  Invent. Math. 202, no.~3, 1069--1166, 2015.

\medskip

{\bf\bibitem{Vojta-1999}
{\rm Vojta}}, P.:
{\em  Nevanlinna theory and Diophantine approximation.}  In: Siu,
Y.-T., Schneider, M. (ed) Several complex variables (Berkeley, CA, 1995–
1996), 535–564, Math. Sci. Res. Inst. Publ., 37, Cambridge Univ. Press,
Cambridge, 1999.

\medskip

{\bf\bibitem{Xie-2018}
{\rm Xie}}, S.-Y.:
{\em On the ampleness of the cotangent bundles of complete intersections.}
(arXiv:1510.06323)
 Invent. Math. 212, 941--996, 2018.

\medskip

{\bf\bibitem{Zaidenberg-1989}
{\rm Zaidenberg}}, M.:
{\em Stability of hyperbolic embeddedness and construction
of examples.}  Math. USSR Sbornik, 63, 351--361, 1989.

}



\end{thebibliography}
\end{document}